\def \x{\textbf{x}}
\def \y{\textbf{y}}
\def \PP{\mathcal{P}}
\def \II{\mathcal{I}}
\def \FF{\mathcal{F}}
\def \probX{\PP(X)}
\def \probY{\PP(Y)}
\def \PPXy{\PP(X\times Y_1)}
\def \FFmn{\FF(\mu,\nu)}
\def \Pimn{\Pi(\mu,\nu)}
\def \IImn{\II(\mu,\nu)}
\def \Xfu{X\times Y_1}
\def \Xfd{X_2\times Y}
\def \rpos{[0,\infty)}
\def \erre{\mathbb{R}}
\def \intxY{\int_{X_2\times Y}}
\def \intXy{\int_{X\times Y_1}}
\def \proj#1{(\mathfrak{p}_{#1})_\#}
\def \projX{(\mathfrak{p}_X)_\#}
\def \projY{(\mathfrak{p}_Y)_\#}
\def \projxx{(\mathfrak{p}_{X_2})_\#}
\def \projyy{(\mathfrak{p}_{Y_1})_\#}
\def \projXy{(\mathfrak{p}_{X\times Y_1})_\#}
\def \projxY{(\mathfrak{p}_{X_2\times Y})_\#}
\def \projxy{(\mathfrak{p}_{Y_1\times X_2})_\#}
\def \pr#1{(\mathfrak{p}_{#1})}
\def \prxy{(\mathfrak{p}_{X_2\times Y_1})}
\def \infpi{\inf_{\pi \in \Pimn}}
\def \infcf{\inf_{\ff \in \FFmn}}
\def \infIp{\inf_{\sigma \in \II(\mu,\nu)}}
\def \mn{(\mu,\nu)}
\def \ff{(\funo,\fdue)}
\def \funo{f^{(1)}}
\def \fdue{f^{(2)}}
\def \sigu{\sigma_{|x_2}^{(1)}}
\def \sigd{\sigma_{|y_1}^{(2)}}
\def \T{\mathbb{T}_c}
\def \ST{\mathbb{CT}_c}
\def \Z{\mathbb{Z}_c}
\def \erreinf{\erre\cup\{+\infty\}}
\def \Leb{\mathcal{L}}
\newtheorem{theorem}{Theorem}
\newtheorem{corollary}{Corollary}
\newtheorem{lemma}{Lemma}
\newtheorem{remark}{Remark}
\newtheorem{example}{Example}
\newtheorem{definition}{Definition}
\title{On the Pythagorean Structure of the Optimal Transport for Separable Cost Functions}
\author{G. Auricchio}
\begin{document}

\maketitle

\begin{abstract}
In this paper, we study the optimal transport problem induced by separable cost functions. In this framework, transportation can be expressed as the composition of two lower-dimensional movements. Through this reformulation, we prove that the random variable inducing the optimal transportation plan enjoys a conditional independence property. We conclude the paper by focusing on some significant settings. 
In particular, we study the problem in the Euclidean space endowed with the squared Euclidean distance. In this instance, we retrieve an explicit formula for the optimal transportation plan between any couple of measures as long as one of them is supported on a straight line.

\end{abstract}

\vspace{1cm}
\small{\textbf{Keywords}: Wasserstein distance, Optimal Transport, Structure of the Optimal Plan\\

\textbf{AMS}: 49Q10, 49Q20, 49Q22}

\section{Introduction}

The Optimal Transport (OT) problem is a classical minimization problem dating back to the work of Monge \cite{Monge1781} and Kantorovich \cite{kantorovich2006translocation,kantorovich1960mathematical}. In this problem, we are given two probability measures, namely $\mu$ and $\nu$, and we search for the cheapest way to reshape $\mu$ into $\nu$. The effort needed in order to perform this transformation depends on a cost function, which describes the underlying geometry of the product space of the support of the two measures. In the right setting, this effort induces a distance between probability measures.

During the last century, the OT problem has been fruitfully used in many applied fields such as the study of systems of particles by Dobrushin \cite{dobrushin1989dynamical}, the Boltzmann equation by Tanaka \cite{Tanaka1978,Tanaka1973,Murata1974}, and the field of fluidodynamics by Yann Brenier \cite{Brenier1991}. All these results pointed out that, by a qualitative description of optimal transport, it was possible to gain insightful information on many open problems. For this reason, the OT problem has become a topic of major interest for analysts, probabilists and statisticians \cite{Villani2008,AGS,Santambrogio2015}. In particular, a plethora of results concerning the uniqueness \cite{uniqueness,10.2307/827090,figalli2007existence}, the structure \cite{struct,CUESTAALBERTOS199786,RePEc:eee:jmvana:v:32:y:1990:i:1:p:48-54}, and the regularity \cite{loeper2009regularity,bouchitte07} of the optimal transportation plan in the continuous framework has been proved.


In this paper, we specialize the problem to the separable cost functions. A cost function is said to be separable if it is the sum of two independent pieces. On $\erre^2$, this means 
\begin{eqnarray*}
c:\erre^2\times\erre^2\to\erre,\quad \quad\quad \quad c(\x,\y)=c_1(x_1,y_1)+c_2(x_2,y_2),
\end{eqnarray*}
where $\x=(x_1,x_2)$ and $\y=(y_1,y_2)$. The most famous ground distances meeting this condition are the cost functions induced by $l^p-$norms, which are
\begin{equation}
    \label{introd_c_pdef}
c_p(\x,\y)=(|x_1-y_1|)^p+(|x_2-y_2|)^p.
\end{equation}

\noindent In \cite{Auricchio2018}, the authors exploited the geometry induced by this class of cost functions to reformulate the transportation problem between discrete measures as an efficient uncapacitated minimum cost flow problem. In this paper, we delve further into the properties of those flows, which we will be calling \emph{cardinal flows}, to retrieve significant information on the Wasserstein cost and the structure of optimal transportation plans. Let $\ff$ be an optimal cardinal flow between $\mu$ and $\nu$ for the generic separable cost function $c=c_1+c_2$ and let $\zeta$ be the common marginal between $\funo$ and $\fdue$, i.e.
\[
\zeta=\pr{Y_1\times X_2}_\# \funo= \pr{Y_1\times X_2}_\# \fdue.
\]

In our first and main result, we show that the restriction of $\funo$ to a horizontal line is an optimal transportation plan between the restriction of $\mu$ and $\zeta$ on the same line. Similarly, the restriction of $\fdue$ on any vertical line is an optimal transportation plan between the restriction of $\zeta$ and $\nu$. 
By expressing this property through the conditional laws of $\mu$, $\zeta$, and $\nu$, we find 
\begin{equation}
\label{introd_formula1}
   W_c(\mu,\nu)=\int_{\erre}W_{c_1}(\mu_{|x_2},\zeta_{|x_2})d\mu_2+\int_{\erre} W_{c_2}(\zeta_{|y_1},\nu_{|y_1})d\nu_1,
\end{equation}
where $W_c\mn$ is the Wasserstein cost between $\mu$ and $\nu$. We call a measure $\zeta$ satisfying the identity \eqref{introd_formula1} a \emph{pivot measure}. We show that, given an optimal cardinal flow and its pivot measure, it is possible to retrieve an optimal transportation plan. Moreover, among all the possible transportation plans, there exists one whose first and last marginals are independent given the other two.

We conclude the paper by analyzing formula \eqref{introd_formula1} in two specifics frameworks.

In the first one the cost function is the sum of two independent distances, i.e. $c=d$ is a separable distance. In this setting, the Wasserstein distance $W_d$ inherits a weaker version of the separability, in particular, we have
\[
W_d\mn=W_d(\mu,\zeta)+W_d(\zeta,\nu).
\]
for any pivot measure $\zeta$.

The other case we consider is $X=Y=\erre^2$ and $c$ is a cost function as in \eqref{introd_c_pdef}. In this case, knowing the pivot measure is enough to retrieve an optimal transportation plan. This is possible because the optimal transportation plan between one-dimensional measures has an explicit formula. In particular, we can rewrite formula \eqref{introd_formula1} through the pseudo-inverse cumulative functions of the conditional laws of $\mu$, $\nu$, and $\zeta$ and find
\begin{align*}
    W_c\mn=   \min_{\zeta} &\int_{\erre}\int_{[0,1]} (|F^{[-1]}_{\mu_{|x_2}}(s)-F^{[-1]}_{\zeta_{|x_2}}(s)|)^pdsd\mu_2\\
    &\;+\int_{\erre}\int_{[0,1]} (|F^{[-1]}_{\zeta_{|y_1}}(t)-F^{[-1]}_{\nu_{|y_1}}(t)|)^pdtd\nu_1.
\end{align*}
Finally, we show that, when the cost function is the squared Euclidean distance, this formula allows us to express the optimal cardinal flow (and therefore the optimal transportation plan) between a measure supported on a straight line and any other measure through a closed formula.
\section{Preliminaries}

\noindent In this section, we fix our notation and recall the Optimal Transportation problem. To keep the discussion as general as possible, we only require $X$ and $Y$ to be Polish spaces.

\noindent For a complete discussion on these topics, we refer to \cite{AGS,bogachev2007measure,Villani2008}.

\subsection{Basic Notions of Measure Theory}
\noindent Given a Polish space $(X,d)$, we denote with $\PP(X)$ the set of all the Borel probability measures over $X$. Given $\mu\in\PP(X)$, we denote with $L^p_\mu$ the set of $L^p$ integrable functions with respect to the measure $\mu$. Given $\mu\in\PP(X)$ and $\nu\in\PP(Y)$, we denote with $\mu\otimes\nu$ the direct product measure of $\mu$ and $\nu$. We say that $\mu\in\PP(X)$ has finite $p-$th moment if, given any $x_0\in X$,
\[
\int_{X}d(x,x_0)^pd\mu<+\infty.
\]
Finally, we denote with $P_p(X)$ the set of measures with finite $p-$th moment.

\begin{definition}[Push-forward of measures]
Let $X$ and $Y$ be two Polish spaces, $T:X\to Y$ a measurable function, and $\mu\in \PP(X)$. The push-forward measure of $\mu$ through $T$ is defined as
\[
T_{\#}\mu(B):=\mu(T^{-1}(B))
\]
for each Borel set $B\subset Y$. 
\end{definition}

\begin{remark}
Given a measurable function $T:X\to Y$ and $\mu\in\PP(X)$, there is an integral equation that characterize the push-forward measure $T_\#\mu$. Given any  measurable function $\phi$ on $Y$ the push-forward measure of $\mu$ through $T$ is the only probability measure $\nu$ satisfying the identity
\begin{equation}
    \label{eq:pushforwardchar}
    \int_X \phi\circ T d\mu =\int_Y \phi d\nu.
\end{equation}
\end{remark}

\begin{lemma}[Chain Rule for Push-forwards]
\label{lm:chainrule}
Let $X$, $Y$, and $Z$ be three Polish spaces and let $T_1:X\to Y$, $T_2:Y\to Z$ be two measurable functions. Given any $\mu \in \PP(X)$, it holds true the chain rule
\[
(T_2\circ T_1)_\# \mu = (T_2)_\#(T_1)_\#\mu.
\]
\end{lemma}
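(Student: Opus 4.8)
The plan is to prove the identity by showing that both measures assign the same value to every Borel set $B\subset Z$, relying only on the definition of push-forward and the elementary behaviour of preimages under composition. First I would record the two measurability facts that make every expression below well-defined: since $T_1$ and $T_2$ are measurable, their composition $T_2\circ T_1:X\to Z$ is again measurable, so $(T_2\circ T_1)_\#\mu$ is a genuine element of $\PP(Z)$; and $(T_1)_\#\mu\in\PP(Y)$ is well-defined, so that applying $(T_2)_\#$ to it yields a measure in $\PP(Z)$ as well.

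Next I would fix an arbitrary Borel set $B\subset Z$. The core identity I would invoke is the purely set-theoretic fact that preimages compose contravariantly, namely $(T_2\circ T_1)^{-1}(B)=T_1^{-1}(T_2^{-1}(B))$, which holds for arbitrary maps and arbitrary subsets with no measure theory involved. Applying the definition of push-forward to the left-hand side gives $(T_2\circ T_1)_\#\mu(B)=\mu\big((T_2\circ T_1)^{-1}(B)\big)=\mu\big(T_1^{-1}(T_2^{-1}(B))\big)$. Applying the definition twice to the right-hand side gives $(T_2)_\#(T_1)_\#\mu(B)=(T_1)_\#\mu\big(T_2^{-1}(B)\big)=\mu\big(T_1^{-1}(T_2^{-1}(B))\big)$, where in the last step I use that $T_2^{-1}(B)$ is a Borel subset of $Y$ (because $T_2$ is measurable), so that the definition of $(T_1)_\#\mu$ legitimately applies to it. Comparing the two computations shows that the measures agree on $B$, and since $B$ was arbitrary they coincide.

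Alternatively, one could bypass sets entirely and argue through the integral characterization \eqref{eq:pushforwardchar}: for every bounded measurable $\phi$ on $Z$, a double application of that identity gives $\int_X \phi\circ(T_2\circ T_1)\,d\mu=\int_Y (\phi\circ T_2)\,d(T_1)_\#\mu=\int_Z \phi\,d(T_2)_\#(T_1)_\#\mu$, while the same quantity equals $\int_Z \phi\,d(T_2\circ T_1)_\#\mu$ by definition, and the uniqueness in \eqref{eq:pushforwardchar} then forces the two measures to be equal. I prefer the direct set-based argument since it needs no integrability hypothesis and no appeal to uniqueness.

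I do not expect a genuine obstacle here: the only point requiring care is the bookkeeping of measurability, ensuring that each preimage that appears is a Borel set on which $\mu$ may be evaluated, and that the contravariant composition of preimages is invoked correctly.
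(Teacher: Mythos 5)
Your proof is correct. The paper states Lemma \ref{lm:chainrule} without proof, treating it as a standard preliminary fact, and your set-based argument --- evaluating both measures on an arbitrary Borel set $B\subset Z$ via the contravariant identity $(T_2\circ T_1)^{-1}(B)=T_1^{-1}\bigl(T_2^{-1}(B)\bigr)$, with the observation that $T_2^{-1}(B)$ is Borel so that $(T_1)_\#\mu$ may legitimately be evaluated on it --- is exactly the canonical argument one would supply.
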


\noindent Let us assume that the Polish space $X$ is the direct product of two Polish spaces $X_1$ and $X_2$. The projections over $X_1$ and $X_2$ are then defined as 
\[
\pr{X_1}(\x):=x_1 \quad \text{and} \quad \pr{X_2}(\x):=x_2
\]
respectively, where $\x=(x_1,x_2)$ is a generic point of $X$. Those functions are continuous (and hence measurable).

\begin{definition}[Marginal Probabilities]
Let $X = X_1\times  X_2$ be a Polish space and $\mu \in \PP(X)$. The first marginal of $\mu$ is the probability measure $\mu_1\in\PP(X_1)$ defined as
\[
\mu_1:=\proj{X_1}\mu.
\]
Similarly, the second marginal of $\mu$ is the probability measure $\mu_2\in\PP(X_2)$ defined as
\[
\mu_2:=\proj{X_2}\mu.
\]
\end{definition}

\begin{lemma}[Gluing Lemma]
\label{lm:gluing}
For $i=1,2,3$, let $X_i$ be Polish spaces and $\mu_i \in \PP(X_i)$. Moreover, let us take $\pi_1 \in \PP(X_1\times X_2)$  such that
\[
\proj{X_1}\pi_1=\mu_1\quad\quad \text{and}\quad\quad\proj{X_2}\pi_1=\mu_2
\]
and $\pi_2 \in \PP(X_2\times X_3)$ such that
\[
\proj{X_2}\pi_2=\mu_2\quad\quad \text{and}\quad\quad\proj{X_3}\pi_2=\mu_3.
\]
\noindent Then, there exists $\pi \in \PP(X_1\times X_2 \times X_3)$ such that
\[
\proj{X_1\times X_2}\pi=\pi_1\quad\quad \text{and}\quad \quad\proj{X_2\times X_3}\pi=\pi_2.
\]
\end{lemma}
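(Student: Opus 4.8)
The plan is to construct $\pi$ by disintegrating both $\pi_1$ and $\pi_2$ with respect to their common marginal $\mu_2$ on the middle factor $X_2$, and then gluing the two conditional families together so that the first and third coordinates become conditionally independent given the second. Since $X_1$, $X_2$, and $X_3$ are Polish, the Disintegration Theorem applies: one obtains a $\mu_2$-a.e. uniquely determined measurable family $\{\lambda_{x_2}\}_{x_2\in X_2}\subset\PP(X_1)$ with
\[
\int_{X_1\times X_2}\phi\,d\pi_1=\int_{X_2}\int_{X_1}\phi(x_1,x_2)\,d\lambda_{x_2}(x_1)\,d\mu_2(x_2)
\]
for every bounded measurable $\phi$, together with a symmetric family $\{\rho_{x_2}\}_{x_2\in X_2}\subset\PP(X_3)$ disintegrating $\pi_2$ along its first marginal. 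Informally, $\pi_1=\int_{X_2}\lambda_{x_2}\otimes\delta_{x_2}\,d\mu_2(x_2)$ and $\pi_2=\int_{X_2}\delta_{x_2}\otimes\rho_{x_2}\,d\mu_2(x_2)$.

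Next I would define the candidate glued measure through its action on test functions: for every bounded measurable $\Phi:X_1\times X_2\times X_3\to\erre$, set
\[
\int\Phi\,d\pi:=\int_{X_2}\int_{X_1}\int_{X_3}\Phi(x_1,x_2,x_3)\,d\rho_{x_2}(x_3)\,d\lambda_{x_2}(x_1)\,d\mu_2(x_2),
\]
so that $\pi=\int_{X_2}\lambda_{x_2}\otimes\delta_{x_2}\otimes\rho_{x_2}\,d\mu_2(x_2)$. One first checks that this defines a genuine Borel probability measure: positivity is immediate, the total mass equals one because each $\lambda_{x_2}$, $\rho_{x_2}$, and $\mu_2$ is a probability measure, and the measurability of the iterated integral (hence the well-posedness of $\pi$) follows from the measurability of the disintegration families via a monotone-class argument.

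Then I would verify the two marginal constraints using the push-forward characterization \eqref{eq:pushforwardchar}. For the first, take $\Phi(x_1,x_2,x_3)=\phi(x_1,x_2)$ depending only on the first two coordinates; since $\rho_{x_2}$ is a probability measure, the innermost integral over $X_3$ collapses to $\phi(x_1,x_2)$, and what remains is exactly the disintegration formula for $\pi_1$, giving $\proj{X_1\times X_2}\pi=\pi_1$. The identity $\proj{X_2\times X_3}\pi=\pi_2$ follows by the symmetric computation with $\Phi$ depending only on the last two coordinates.

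The main obstacle is the existence and measurability of the disintegrations; once the Disintegration Theorem for Polish spaces is invoked, the remaining verifications are routine manipulations of iterated integrals. A mild point worth care is that the families $\{\lambda_{x_2}\}$ and $\{\rho_{x_2}\}$ are only determined $\mu_2$-almost everywhere, but this is harmless since both $\pi$ and the target marginals are defined by integration against $\mu_2$.
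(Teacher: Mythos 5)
Your proof is correct. Note that the paper itself gives no proof of this lemma --- it is stated in the Preliminaries as a classical result, implicitly deferred to the cited monographs (Villani, Ambrosio--Gigli--Savar\'e) --- and your argument is exactly the standard one found there: disintegrate both plans over the common marginal $\mu_2$, glue the conditional families by forming $\lambda_{x_2}\otimes\delta_{x_2}\otimes\rho_{x_2}$, integrate against $\mu_2$, and verify the two marginal constraints by collapsing the inner integral; the measurability caveats you flag (measurable dependence of the disintegrations on $x_2$, and their $\mu_2$-a.e.\ uniqueness) are precisely the routine points of that canonical proof.
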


\begin{definition}[Disintegration of a Measure]
\label{th:disintegration}
Let $f:X \to Y$ be a measurable function and $\mu \in \PP(X).$ We say that a family $\{\mu_y\}_{y\in Y}$ is a disintegration of $\mu$ according to $f$ if every $\mu_y$ is a probability measure concentrated on $f^{-1}(\{y\})$ such that
\[
y \to \int_{X}\phi d\mu_y
\]
is a measurable function and 
\begin{equation}
    \label{eq:disintegrationformula}
\int_X\phi d\mu=\int_Y\bigg(\int_{X}\phi d\mu_y\bigg)d(f_\#\mu)
\end{equation}
for every $\phi \in C(X)$. With a slight abuse of notation, the disintegration of $\mu$ is also written as 
\begin{equation}
    \label{eq:disnitegration}
    \mu=\mu_y\otimes \nu
\end{equation}
where $\nu=f_\#\mu.$
\end{definition}

\noindent Whenever $f$ is measurable and $\mu\in\PP(X)$, the disintegration of $\mu$ with respect to $f$ exists (see \cite{bogachev2007measure}, Chapter $10$).  In particular, if $X=X_1\times X_2$ and $f=\pr{X_1}$, formula \eqref{eq:disnitegration} reads as
\[
\mu=\mu_{|x_1}\otimes\mu_1.
\]
The measure $\mu_{|x_1}$ is called conditional law of $\mu$ given $x_1$.


\subsection{The Optimal Transport Problem}

The first formulation of the transportation problem is the one due to Monge and, in modern language, to Kantorovich. In \cite{kantorovich_monge}, the author modelized the transshipment of mass through a probability measure over the product space $X\times Y$. He called those measures transportation plans.

\begin{definition}[Transportation Plan]
Let $\mu$ and $\nu$ be two measures over two Polish spaces $X$ and $Y$. The probability measure $\pi \in \PP(X\times Y)$ is a transportation plan between $\mu$ and $\nu$ if
\[
\proj{X}\pi=\mu\quad \quad \text{and} \quad \quad \proj{Y}\pi=\nu.
\]
We denote with $\Pimn$ the set of all the transportation plans between $\mu$ and $\nu.$
\end{definition}

\begin{definition}[Transportation Functional]
\label{def:transp_funct}
Let $\mu\in\PP(X)$, $\nu\in\PP(Y)$, and let $c:X\times Y\to \erreinf$ be a lower semi-continuous function such that there exist two upper semi-continuous functions $a\in L^{1}_\mu$ and $b\in L^{1}_\nu$ such that
\begin{equation}
\label{eq:condoncost2}
    c(x,y)\geq a(x)+b(x)
\end{equation}
for each $(x,y) \in X\times Y$. The transportation functional $\T:\Pimn\to \erre\cup\{+\infty\}$ is defined as
\begin{equation}
    \label{eq:transportfunct}
    \T(\pi):=\int_{X\times Y}cd\pi.
\end{equation}
\end{definition}

\noindent The conditions asked to the cost function in Definition \ref{def:transp_funct} are the minimal ones for which it makes sense defining the integral in \eqref{eq:transportfunct}. Ensured those conditions, we define the following minimum problem.

\begin{definition}[Minimal Transportation Cost]
\label{def:minimaltransportationcost}
Let us take a cost function $c:X \times Y \to \erreinf$ as in Definition \ref{def:transp_funct}. The minimal transportation cost functional $W_c:\PP(X)\times\PP(Y)\to\erreinf$ is defined as  
\begin{equation}
    \label{eq:transportcost}
    (\mu,\nu)\to W_c(\mu,\nu):=\inf_{\pi\in \Pimn} \T(\pi).
\end{equation}
The value $W_c\mn$ is also called Wasserstein cost between $\mu$ and $\nu$.
\end{definition}

\noindent By making further assumptions on $c$, it is possible to prove that the infimum in \eqref{eq:transportcost} is a minimum. In particular, when the cost function is non negative, a minimizing solution exists. We denote with $\Gamma_o\mn$ the set of minimizers. For a complete discussion on the existence of the solution, we refer to \cite[Chapter 4]{Villani2008}.

\begin{lemma}[Measurable Selection of Plans, Villani \cite{Villani2008}, Chapter 5, Corollary 5.22]
\label{lm:measselplans}
Let $X$ and $Y$ be two Polish spaces and $c:X\times Y\to\erreinf$ a continuous cost function such that $\inf c >-\infty$. Given $\Omega$ a Polish space and $\lambda\in\PP(\Omega)$, consider a measurable map
\[
\omega \to (\mu_\omega,\nu_\omega)
\]
that goes from $\Omega$ to $\PP(X)\times\PP(Y)$. Then there is a measurable choice
\[
\omega\to\pi_\omega
\]
where for each $\omega$, $\pi_\omega$ is the optimal transportation plan between $\mu_\omega$ and $\nu_\omega.$
\end{lemma}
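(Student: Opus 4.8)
The plan is to realise the assignment $\omega\mapsto\Gamma_o(\mu_\omega,\nu_\omega)$, where $\Gamma_o(\mu_\omega,\nu_\omega)$ denotes the set of optimal plans, as a measurable multifunction with nonempty closed values into a Polish space, and then to extract a single-valued measurable map by a general selection theorem of Kuratowski--Ryll-Nardzewski type. Since $X$ and $Y$ are Polish, so is $X\times Y$, and therefore $\PP(X\times Y)$ endowed with the narrow topology is again Polish and metrizable; this is the ambient space in which the selection will take place.

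First I would record the pointwise structure of the target sets. For each fixed $\omega$ the constraint set $\Pi(\mu_\omega,\nu_\omega)$ is a closed convex subset of $\PP(X\times Y)$, and it is narrowly compact: the marginals $\mu_\omega$ and $\nu_\omega$ are tight, being Borel measures on Polish spaces, so the whole set is tight and Prokhorov's theorem applies. Because $c$ is continuous with $\inf c>-\infty$, the functional $\pi\mapsto\T(\pi)=\intXY c\,d\pi$ is narrowly lower semicontinuous, hence it attains its minimum on the compact set $\Pi(\mu_\omega,\nu_\omega)$; thus $\Gamma_o(\mu_\omega,\nu_\omega)$ is nonempty, and as the set of minimizers of a lower semicontinuous functional over a compact set it is closed.

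The heart of the matter, and the step I expect to be the main obstacle, is the measurability of the multifunction $\omega\mapsto\Gamma_o(\mu_\omega,\nu_\omega)$. I would establish it by showing that the graph
\[
\mathcal{G}=\{(\omega,\pi)\ :\ \pi\in\Pi(\mu_\omega,\nu_\omega),\ \T(\pi)\le W_c(\mu_\omega,\nu_\omega)\}
\]
is a measurable subset of $\Omega\times\PP(X\times Y)$. The membership $\pi\in\Pi(\mu_\omega,\nu_\omega)$ can be tested against a countable convergence-determining family $\{f_k\}\subset C_b(X)$, $\{g_k\}\subset C_b(Y)$: each map $(\omega,\pi)\mapsto\int f_k\circ\mathfrak{p}_X\,d\pi-\int f_k\,d\mu_\omega$ is jointly measurable, being continuous in $\pi$ and measurable in $\omega$ by hypothesis, and $\pi\in\Pi(\mu_\omega,\nu_\omega)$ exactly when all such expressions, together with their $Y$-counterparts, vanish. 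The functional $(\omega,\pi)\mapsto\T(\pi)$ is narrowly lower semicontinuous in $\pi$ and constant in $\omega$, hence Borel. The missing ingredient is the measurability of the value function $\omega\mapsto W_c(\mu_\omega,\nu_\omega)$, which I would obtain from Kantorovich duality: under the present hypotheses
\[
W_c(\mu_\omega,\nu_\omega)=\sup\Big\{\intX\phi\,d\mu_\omega+\intY\psi\,d\nu_\omega\ :\ \phi(x)+\psi(y)\le c(x,y)\Big\},
\]
and restricting the supremum to a countable family of admissible potentials exhibits $\omega\mapsto W_c(\mu_\omega,\nu_\omega)$ as a countable supremum of measurable functions, hence as a Borel function.

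Once $\mathcal{G}$ is seen to be measurable with nonempty closed fibers, the Kuratowski--Ryll-Nardzewski selection theorem produces a measurable map $\omega\mapsto\pi_\omega$ with $\pi_\omega\in\Gamma_o(\mu_\omega,\nu_\omega)$ for every $\omega$, which is precisely the asserted measurable choice of optimal plans. The delicate technical point throughout is the reduction of both the marginal constraints and the dual problem to \emph{countable} determining families: it is exactly this countability that converts the narrow semicontinuity and the duality supremum into honest Borel measurability, allowing the classical selection theorem to apply. Should one be content with universal measurability rather than Borel measurability, the same scheme goes through more cheaply by writing the superlevel sets of the value function as projections of analytic sets and invoking the Jankov--von Neumann uniformisation theorem, which is entirely harmless for the probabilistic applications in which this lemma is used.
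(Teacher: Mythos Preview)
The paper does not give its own proof of this lemma: it is stated with an explicit attribution to Villani \cite{Villani2008}, Corollary 5.22, and used as a black box in the proof of Theorem \ref{th::conditionaltransport}. Your sketch is a faithful outline of the standard argument behind that reference---nonempty closed values of $\omega\mapsto\Gamma_o(\mu_\omega,\nu_\omega)$ in the Polish space $\PP(X\times Y)$, Borel measurability of the graph via countable determining classes and measurability of the value function, followed by a Kuratowski--Ryll-Nardzewski (or Jankov--von Neumann) selection---so there is nothing substantive to compare; you are simply supplying what the paper deliberately omits, along the expected lines.
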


\subsubsection{One Dimensional Case}

\noindent When both the measures $\mu$ and $\nu$ are supported on $\erre$ and the cost function $c$ is convex, the solution exists, is unique, and is characterized by the pseudo-inverse function of $\mu$ and $\nu$.

\begin{definition}
Given $\mu,\nu\in\PP(\erre)$, the co-monotone transportation plan $\gamma_{mon}$ between $\mu$ and $\nu$ is defined as 
\[
\gamma_{mon}:=(F^{[-1]}_{\mu},F^{[-1]}_{\nu})_\#\Leb_{|[0,1]},
\]
where $F^{[-1]}_{\mu}$ and $F^{[-1]}_{\nu}$ are the pseudo-inverse of the cumulative functions of $\mu$ and $\nu$, respectively, and $\Leb_{|[0,1]}$ is the Lebesgue measure restricted on $[0,1]$.
\end{definition}

\begin{theorem}[Optimality of the co-monotone plan, Santambrogio \cite{Santambrogio2015}, Chapter 2, Theorem 2.9]
\label{th:optmonotoneplan}
Let $h:\erre\to\erre_+$ be a strictly convex function such that $h(0)=0$ and $\mu,\nu\in\PP(\erre)$. Consider the cost 
\[
c(x,y)=h(|x-y|)
\]
and suppose that this cost is feasible for the transportation problem. Then the Optimal Transportation problem has a unique solution which is $\gamma_{mon}$. 
\end{theorem}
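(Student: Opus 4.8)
The plan is to reduce the statement to two classical facts about couplings on the real line: that every optimizer is supported on a monotone (non-crossing) set, and that there is a unique coupling of $\mu$ and $\nu$ with monotone support, namely $\gamma_{mon}$. Existence of at least one minimizer is guaranteed because $c(x,y)=h(|x-y|)$ is non-negative, lower semi-continuous and feasible, so $\Gamma_o\mn\neq\emptyset$ (cf.\ \cite[Chapter 4]{Villani2008}). It therefore suffices to show that any $\pi\in\Gamma_o\mn$ is concentrated on a monotone set and that this property pins $\pi$ down completely.

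The heart of the argument is a strict submodularity (``no-crossing'') inequality forced by the strict convexity of $h$. Since $h\ge 0$ and $h(0)=0$, the point $0$ is the global minimum of $h$, so $h$ is strictly increasing on $[0,\infty)$ and $H(t):=h(|t|)$ is a convex function on $\erre$ whose (right-)derivative $H'$ is strictly increasing. I would then invoke the increasing-differences property of convex functions: for $a>0$ the map $u\mapsto H(u+a)-H(u)=\int_u^{u+a}H'(s)\,ds$ is strictly increasing in $u$. Applying this with $a=x_2-x_1>0$ at the points $u=x_1-y_1<x_1-y_2=u'$ (valid for $x_1<x_2$ and $y_2<y_1$) yields
\[
h(|x_1-y_2|)+h(|x_2-y_1|)<h(|x_1-y_1|)+h(|x_2-y_2|),
\]
i.e.\ the ``aligned'' pairing is strictly cheaper than the ``crossing''. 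Since an optimal plan is concentrated on a $c$-cyclically monotone set, the two-point instance of cyclical monotonicity combined with this strict inequality rules out the existence of two support points $(x_1,y_1),(x_2,y_2)$ with $x_1<x_2$ and $y_1>y_2$. Hence the support $\Gamma$ of any $\pi\in\Gamma_o\mn$ is monotone.

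Finally, I would upgrade ``monotone support'' to ``$\pi=\gamma_{mon}$''. Monotonicity of $\Gamma$ forbids the simultaneous presence of support points in $(-\infty,x]\times(y,\infty)$ and in $(x,\infty)\times(-\infty,y]$, so for every $(x,y)$ at least one of these two rectangles is $\pi$-null; examining both cases one computes
\[
\pi\big((-\infty,x]\times(-\infty,y]\big)=\min\big(F_\mu(x),F_\nu(y)\big).
\]
Because the values on such rectangles determine a Borel measure on $\erre^2$ uniquely, and because the quantile coupling $\gamma_{mon}$ realizes exactly this cumulative distribution (via the relation $F^{[-1]}_{\mu}(u)\le x\iff u\le F_\mu(x)$), we conclude $\pi=\gamma_{mon}$. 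As this holds for every minimizer, $\gamma_{mon}$ is the unique solution.

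I expect the main obstacle to be the rigorous passage from the pointwise no-crossing inequality to a statement about the whole support: this is where strict convexity is essential, since it makes the swap strictly improving and thereby forces uniqueness rather than mere optimality, and where one should rely on $c$-cyclical monotonicity of optimal plans instead of an ad hoc finite swapping construction. A secondary delicate point is handling atoms of $\mu$ and $\nu$ in the identification $\pi=\gamma_{mon}$, which the definition of $\gamma_{mon}$ through pseudo-inverse cumulative functions accommodates automatically.
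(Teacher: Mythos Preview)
The paper does not supply its own proof of this theorem: it is quoted as a preliminary result from \cite[Chapter~2, Theorem~2.9]{Santambrogio2015} and used later as a black box. Your proposal is the standard argument from that reference (existence via lower semi-continuity and non-negativity of $c$; strict submodularity of $(x,y)\mapsto h(|x-y|)$ from strict convexity; $c$-cyclical monotonicity forcing monotone support; identification of the unique monotone coupling via $\pi((-\infty,x]\times(-\infty,y])=\min(F_\mu(x),F_\nu(y))$), so it is correct and coincides with the approach the paper defers to.
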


\noindent Knowing how to express the optimal transportation plan, allow us to express the Wasserstein cost through the pseudo-inverse of the cumulative functions of $\mu$ and $\nu$.

\begin{corollary}[Santambrogio \cite{Santambrogio2015}, Chapter 2, Proposition 2.17]
\label{cor:structureW_c}
Let us take $\mu,\nu\in\PP(\erre)$. If $c(x,y)=(|x-y|)^p,$ with $p\geq1$, then 
\[
W_p^p(\mu,\nu)=\int_{[0,1]}(|F^{[-1]}_\mu-F^{[-1]}_\nu|)^pd\Leb.
\]
Moreover, for $p=1$,
\[
W_1(\mu,\nu)=\int_{\erre}|F_\mu(t)-F_\nu(t)|dt.
\]
\end{corollary}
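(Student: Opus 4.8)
The plan is to combine the explicit description of the comonotone plan $\gamma_{mon}$ with Theorem \ref{th:optmonotoneplan}, and then to convert the resulting quantile formula into the cumulative one by a Fubini-type argument. First I would compute the transportation cost of $\gamma_{mon}$. Since $\gamma_{mon}=(F^{[-1]}_\mu,F^{[-1]}_\nu)_\#\Leb_{|[0,1]}$, applying the push-forward characterization \eqref{eq:pushforwardchar} to the map $s\mapsto(F^{[-1]}_\mu(s),F^{[-1]}_\nu(s))$ with integrand $c(x,y)=(|x-y|)^p$ yields
\[
\T(\gamma_{mon})=\int_{\erre\times\erre}(|x-y|)^p\,d\gamma_{mon}=\int_{[0,1]}(|F^{[-1]}_\mu(s)-F^{[-1]}_\nu(s)|)^p\,ds.
\]
For $p>1$ the function $h(t)=t^p$ is strictly convex with $h(0)=0$, so Theorem \ref{th:optmonotoneplan} guarantees that $\gamma_{mon}$ is the unique optimal plan; hence $W_p^p(\mu,\nu)=\T(\gamma_{mon})$ and the first identity follows at once.

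The first subtlety is the case $p=1$, where $h(t)=t$ is convex but not strictly convex, so Theorem \ref{th:optmonotoneplan} does not apply verbatim. I would recover the optimality of $\gamma_{mon}$ either by a limiting argument, approximating the cost by the strictly convex costs $(|x-y|)^{p}$ with $p\downarrow 1$ and passing to the limit while using that $\gamma_{mon}$ is independent of $p$, or by invoking cyclical monotonicity directly, since the support of $\gamma_{mon}$ is a monotone set and such sets are optimal for every convex cost. Either route gives $W_1(\mu,\nu)=\int_{[0,1]}|F^{[-1]}_\mu(s)-F^{[-1]}_\nu(s)|\,ds$, which completes the first formula for all $p\geq1$.

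It then remains to prove the second identity, namely that
\[
\int_{[0,1]}|F^{[-1]}_\mu(s)-F^{[-1]}_\nu(s)|\,ds=\int_{\erre}|F_\mu(t)-F_\nu(t)|\,dt.
\]
The key tool is the Galois-type equivalence $F^{[-1]}_\mu(s)\leq t \iff s\leq F_\mu(t)$ relating a cumulative function to its pseudo-inverse. Using it, I would express both sides as the two-dimensional Lebesgue measure of the region lying between the graphs of $F_\mu$ and $F_\nu$ inside $\erre\times[0,1]$: slicing this region by vertical lines $\{t\}\times[0,1]$ produces segments of length $|F_\mu(t)-F_\nu(t)|$, whereas slicing by horizontal lines $\erre\times\{s\}$ produces segments of length $|F^{[-1]}_\mu(s)-F^{[-1]}_\nu(s)|$. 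Fubini's theorem then identifies the two iterated integrals with the common value of that measure.

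The main obstacle I anticipate is the combination of the $p=1$ degeneracy with the care required in the Fubini step. The cumulative functions and their pseudo-inverses are determined only up to the at most countable sets of jump points and flat parts, so one must verify that these exceptional sets are Lebesgue-negligible and that the Galois equivalence holds almost everywhere, ensuring that the region between the graphs is measurable and that its horizontal and vertical slicings agree.
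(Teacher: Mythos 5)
Your proposal is correct, but note that the paper itself contains no proof of this corollary: it is imported verbatim from Santambrogio \cite{Santambrogio2015} (Chapter 2, Proposition 2.17), just as Theorem \ref{th:optmonotoneplan} is imported as a black box. What you have written is essentially a reconstruction of the standard proof from that reference: the cost of $\gamma_{mon}$ computed via the push-forward identity \eqref{eq:pushforwardchar}, optimality of $\gamma_{mon}$ from Theorem \ref{th:optmonotoneplan} when $p>1$, a separate treatment of $p=1$ where strict convexity fails (a genuine issue which you correctly flag; of your two fixes, the cyclical-monotonicity route is the cleaner one, since a plan with monotone support is $c$-cyclically monotone, hence optimal, for every cost of the form $c(x,y)=h(x-y)$ with $h$ convex), and finally the layer-cake/Fubini identification of the quantile formula with the cumulative one. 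Two small refinements: first, the Galois equivalence $F^{[-1]}_\mu(s)\le t \iff s\le F_\mu(t)$ is exact, not merely almost-everywhere, when $F_\mu$ is the right-continuous cumulative function and $F^{[-1]}_\mu(s)=\inf\{t:F_\mu(t)\ge s\}$, so the measurability concerns in your last paragraph dissolve under that convention; second, Theorem \ref{th:optmonotoneplan} formally requires the cost to be feasible, but if $W_p^p(\mu,\nu)=+\infty$ then every plan, in particular $\gamma_{mon}$, has infinite cost and the claimed identity holds trivially, so the statement for general $\mu,\nu\in\PP(\erre)$ is not lost.
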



\subsubsection{Wasserstein Distance}

\label{sc:Wassdist}

\noindent If we take $X=Y$ and choose $d$ as the cost function, the optimal transportation problem lifts the distance $d$ over the space $P_p(X)$. The resulting distance is called the Wasserstein distance.

\begin{definition}[Wasserstein Distance]
\label{def:Wasdist}
Let $(X,d)$ be a Polish space and $p\in [1,\infty)$. The $p-$order Wasserstein distance between the probability measures $\mu$ and $\nu$ on $X$ is defined as
\[
W_{d^p}^p(\mu,\nu):=\inf_{\pi \in \Pimn}\mathbb{T}_{d^p}(\pi).
\]
When $p=1$, the $1-$Wasserstein distance is also known as Kantorovich-Rubistein distance.
\end{definition}

\noindent The Wasserstein distance $W_{d^p}^p$ is well defined when used to compare measures in $P_p(X)$.

\begin{theorem}
\label{thm:Wpdistance}
The $W_{d^p}$ distance is a finite distance over $P_p(X)$. When the set $X$ is bounded, the $W_{d^p}$ distance induces the weak topology on the space $P_p(X)$. Moreover, $(P_p(X),W_{d^p})$ is a Polish space.
\end{theorem}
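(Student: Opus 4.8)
The plan is to verify the claim in four pieces: finiteness, the metric axioms, the coincidence with the weak topology under boundedness, and the Polish property, leaning on the existence of optimal plans for the non-negative cost $d^p$ recalled after Definition \ref{def:minimaltransportationcost}.

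\emph{Finiteness and the metric axioms.} Since $\mu,\nu\in P_p(X)$, fixing a base point $x_0\in X$ and testing with the product plan $\mu\otimes\nu\in\Pimn$ gives, by the convexity bound $d(x,y)^p\le 2^{p-1}\bigl(d(x,x_0)^p+d(x_0,y)^p\bigr)$,
\[
W_{d^p}^p(\mu,\nu)\le\int_{X\times X}d(x,y)^p\,d(\mu\otimes\nu)\le 2^{p-1}\Bigl(\int_X d(x,x_0)^p\,d\mu+\int_X d(x_0,y)^p\,d\nu\Bigr)<+\infty,
\]
so the distance is finite on $P_p(X)$. Non-negativity is clear because $d^p\ge 0$, and symmetry follows since $(x,y)\mapsto(y,x)$ maps $\Pimn$ onto $\Pi(\nu,\mu)$ while preserving $\int d^p$. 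For the identity of indiscernibles, the diagonal plan $(\mathrm{id},\mathrm{id})_\#\mu$ shows $W_{d^p}(\mu,\mu)=0$; conversely, since $d^p\ge0$ is continuous an optimal plan $\pi\in\Gamma_o\mn$ exists, and $W_{d^p}(\mu,\nu)=0$ forces $\pi$ to be concentrated on $\{x=y\}$, whence $\mu=\proj{X}\pi=\proj{Y}\pi=\nu$.

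\emph{Triangle inequality.} This is the one structural step. Given a third measure $\rho$, I would take optimal plans between $\mu,\nu$ and between $\nu,\rho$ and glue them via Lemma \ref{lm:gluing} into a measure $\pi$ on $X\times X\times X$ with the prescribed two-dimensional marginals. Its projection on the first and third factors is a plan between $\mu$ and $\rho$, and Minkowski's inequality in $L^p(\pi)$ applied to $d(x,z)\le d(x,y)+d(y,z)$ yields
\[
W_{d^p}(\mu,\rho)\le\Bigl(\int d(x,z)^p\,d\pi\Bigr)^{1/p}\le\Bigl(\int d(x,y)^p\,d\pi\Bigr)^{1/p}+\Bigl(\int d(y,z)^p\,d\pi\Bigr)^{1/p}=W_{d^p}(\mu,\nu)+W_{d^p}(\nu,\rho).
\]

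\emph{Topology under boundedness and the Polish property.} When $\mathrm{diam}(X)=D<\infty$ every Borel probability measure belongs to $P_p(X)$. I would compare $W_{d^p}$ with the first-order distance $W_d$: Jensen's inequality gives $W_d\le W_{d^p}$, while $d^p\le D^{p-1}d$ gives $W_{d^p}^p\le D^{p-1}W_d$, so the two metrics have the same convergent sequences. It then suffices to recall that $W_d$ metrizes the weak topology on a bounded Polish space, which follows from the Kantorovich--Rubinstein duality $W_d(\mu,\nu)=\sup\{\int f\,d\mu-\int f\,d\nu:\ \mathrm{Lip}(f)\le1\}$; equivalently, one invokes the general fact that $W_{d^p}$-convergence is weak convergence together with convergence of $p$-th moments, the latter being automatic once $x\mapsto d(x,x_0)^p$ is bounded. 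For the Polish property over the whole $P_p(X)$, separability is obtained by showing that finitely supported measures with rational weights on a countable dense subset of $X$ are $W_{d^p}$-dense: truncate the mass outside a large ball, partition the ball into small cells, and move and round the mass, bounding the cost by the cell diameter. Completeness is obtained by proving that a $W_{d^p}$-Cauchy sequence is tight, extracting a weak limit $\mu$ by Prokhorov's theorem, checking $\mu\in P_p(X)$ by Fatou's lemma, and upgrading subsequential convergence to full convergence via the Cauchy property.

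The main obstacle is exactly the topology and completeness part: finiteness, symmetry and the triangle inequality are elementary once optimal plans are known to exist, but identifying $W_{d^p}$-convergence with weak convergence and proving completeness genuinely require the Kantorovich--Rubinstein duality and a tightness argument through Prokhorov's theorem. For these two analytic ingredients I would defer to the cited references \cite{Villani2008,AGS} rather than reprove them, since the statement here is a classical one recalled only to fix the framework.
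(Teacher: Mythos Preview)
Your sketch is correct and follows the standard route (product plan for finiteness, gluing plus Minkowski for the triangle inequality, Kantorovich--Rubinstein and Prokhorov for the topological and completeness claims), but there is nothing to compare it to: the paper states Theorem~\ref{thm:Wpdistance} in the preliminaries without proof, treating it as a classical fact drawn from \cite{Villani2008,AGS,Santambrogio2015}. You yourself note this at the end, and that observation is exactly right---the paper simply cites the result, so your outline supplies strictly more than what appears there.
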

\section{Our Contribution}

In this section, we report our main results.  In paragraph \ref{ch1sect1}, we study the properties of the cardinal flows and introduce the pivot measure. Our main result on the structure of the Wasserstein cost is stated in Theorem \ref{th::conditionaltransport}.

\noindent In paragraph \ref{par:indip}, we show how to retrieve an optimal transportation plan from an optimal cardinal flow. Moreover, we prove that there exists an optimal transportation plan whose first and last marginal laws are independent given the other two.

\noindent Finally, in paragraph \ref{par_fin}, we analyze our formula in two specific frameworks.

\subsection{The Cardinal Flow and the Pivot Measure Formulation}
\label{ch1sect1}
From now on, we assume $X$ and $Y$ to be the product of smaller Polish spaces, i.e. $X=X_1\times X_2$ and $Y=Y_1\times Y_2$. In this framework, we can introduce the separable cost function and reformulate the optimal transportation problem as an optimal cardinal flow problem.

\begin{definition}[Separable Cost Function]
\label{def:separablecostfunction}
Let $X=X_1\times X_2$ and $Y=Y_1\times Y_2$ be two Polish spaces. We say that $c:X\times Y \to\erre$ is  \emph{separable} if there exist a pair of functions $c_1:X_1\times Y_1\to\erre$ and $c_2:X_2\times Y_2\to \erre$ such that
\[
c(\x,\y):=c_1(x_1,y_1)+c_2(x_2,y_2)
\]
for each $\x=(x_1,x_2)\in X$ and for each $\y=(y_1,y_2)\in Y$.
\end{definition}

\begin{definition}[Cardinal Flow]
 Let us take $\mu \in \probX$ and $\nu \in \probY$. We say that the couple of measures $\ff \in \PP(\Xfu)\times \PP(\Xfd)$ is a \emph{cardinal flow} between $\mu$ and $\nu$ if it satisfies the following conditions 
 \begin{itemize}
     \item The marginal on $X$ of $\funo$ is equal to $\mu$, i.e.
     \begin{equation}
     \label{def:cond1}
       \mu=\projX \funo. 
     \end{equation}
     \item The marginal on $Y$ of $\fdue$ is equal to $\nu$, i.e.
     \begin{equation}
     \label{def:cond2}
       \nu=\projY \fdue.
     \end{equation}
     \item The flows $\funo$ and $\fdue$ have the same marginal on $Y_1\times X_2$, i.e.
     \begin{equation}
         \label{def:cond3}
           \projxy\funo = \projxy\fdue.
     \end{equation}
   
 \end{itemize}
 We call the measures $\funo$ and $\fdue$ first and second cardinal flow, respectively.
 We denote with $\FFmn$ the set of all cardinal flows between $\mu$ and $\nu$.
\end{definition}

\begin{remark}
For any couple of probability measures $\mu$ and $\nu$, the set $\FFmn$ is nonempty. In fact, the couple $\ff$, defined as
\[
f^{(1)}=\mu \otimes \nu_1
\quad\quad\text{and}\quad\quad f^{(2)}=\mu_2\otimes \nu,
\]
is an element of $\FFmn$.
\end{remark}

\begin{remark}
The sets $\FFmn$ and $\FF(\nu,\mu)$ are, in general, not equal. For instance, let us take $X=Y=\erre^2$, $\mu=\delta_{(0,0)}$ and $\nu=\delta_{(1,1)}$. In this case, $\FFmn=\{\delta_{((0,0);1)}\}$ while $\FF(\nu,\mu)=\{\delta_{((1,1);0)}\}$. 
\end{remark}

\begin{definition}[Cardinal Flow Functional]
Given two probability measures $\mu \in \probX$, $\nu \in \probY$, and a separable cost function $c=c_1+c_2$ over $X \times Y$. We define the first and second cardinal transportation functionals as
\[
\ST^{(1)}(\funo)=\intXy c_1 d\funo\quad\quad\text{and}\quad\quad \ST^{(2)}(\fdue)=\intxY c_2 d\fdue
\]
where $F=\ff \in \FFmn$. The total cardinal flow functional is then defined as 
\[
\ST(F)=\ST^{(1)}(\funo)+\ST^{(2)}(\fdue).
\]
\end{definition}

\begin{theorem}
\label{th:cardflows}
Let $\mu \in \probX$, $\nu \in \probY$, and let $c:X \times Y \to \rpos$ be a separable cost function, then 
\[
\infpi\T(\pi) =\infcf  \ST\ff.
\]
\end{theorem}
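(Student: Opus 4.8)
The plan is to prove the equality of the two infima by establishing two inequalities, each realized by an explicit construction that translates between transportation plans in $\Pimn$ and cardinal flows in $\FFmn$ while preserving (or not increasing) the relevant cost.

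For the inequality $\infcf \ST\ff \leq \infpi\T(\pi)$, I would start from an arbitrary plan $\pi\in\Pimn$ and build a cardinal flow from it. Since $X=X_1\times X_2$ and $Y=Y_1\times Y_2$, a point of $X\times Y$ carries four coordinates $(x_1,x_2,y_1,y_2)$; the space $\Xfu=X\times Y_1$ and $\Xfd=X_2\times Y$ are obtained by suitable projections. The natural definition is
\[
\funo := \proj{X\times Y_1}\pi \qquad\text{and}\qquad \fdue := \proj{X_2\times Y}\pi.
\]
I would then verify the three defining conditions of a cardinal flow using Lemma \ref{lm:chainrule}: condition \eqref{def:cond1} follows because projecting $\funo$ onto $X$ is the same as projecting $\pi$ onto $X$, which gives $\mu$; condition \eqref{def:cond2} is symmetric; and condition \eqref{def:cond3} holds because both $\projxy\funo$ and $\projxy\fdue$ collapse to $\proj{Y_1\times X_2}\pi$. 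The cost identity is then immediate from separability: since $c=c_1+c_2$ and $c_1$ depends only on $(x_1,y_1)$ while $c_2$ depends only on $(x_2,y_2)$, the pushforward characterization \eqref{eq:pushforwardchar} gives $\intXy c_1 d\funo = \intXY c_1\, d\pi$ and likewise for $c_2$, so $\ST(F)=\T(\pi)$ exactly. Taking the infimum over $\pi$ yields the desired inequality.

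The reverse inequality $\infpi\T(\pi)\leq\infcf\ST\ff$ is the harder direction and I expect the Gluing Lemma to do the essential work. Given a cardinal flow $\ff$, condition \eqref{def:cond3} tells me that $\funo$ and $\fdue$ share the common marginal $\zeta:=\projxy\funo=\projxy\fdue$ on $Y_1\times X_2$. Viewing $\funo$ as a measure whose relevant ``middle'' marginal is $\zeta$ and $\fdue$ likewise, I would apply Lemma \ref{lm:gluing} with the role of the three factors played by $X_1$, the pair $Y_1\times X_2$, and $Y_2$, to glue $\funo$ and $\fdue$ into a single measure on $X_1\times(Y_1\times X_2)\times Y_2$, hence on $X\times Y$ after reordering coordinates. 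The main obstacle is bookkeeping: one must carefully set up the gluing so that the shared marginal is genuinely the full two-dimensional $\zeta$ on $Y_1\times X_2$, not merely one of its one-dimensional marginals, and then check that the glued measure $\pi$ really projects to $\mu$ on $X$ and to $\nu$ on $Y$. Once $\pi\in\Pimn$ is obtained, I would confirm that it recovers $\funo$ and $\fdue$ as the appropriate projections, so that the same separability-plus-pushforward argument as above gives $\T(\pi)=\ST(F)$.

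Combining the two inequalities gives the equality of infima. I would remark that both directions are in fact cost-preserving rather than merely inequality-producing, so the correspondence between plans and flows is tight; the only genuine subtlety is the measure-theoretic gluing, and I would make sure the hypotheses of Lemma \ref{lm:gluing} are verified against the matching marginals guaranteed by \eqref{def:cond3}.
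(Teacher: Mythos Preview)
Your proposal is correct and matches the paper's approach essentially line for line. The paper's formal proof merely cites the discrete version in \cite{Auricchio2018}, but immediately afterward it spells out precisely your construction: the operator $L(\pi)=(\projXy\pi,\projxY\pi)$ maps $\Pimn$ onto $\FFmn$ (surjectivity via the Gluing Lemma, exactly as you describe) and satisfies $\T(\pi)=\ST(L(\pi))$, which gives both inequalities at once.
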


\begin{proof}
The proof of this Theorem, in the discrete case, has been proposed in \cite{Auricchio2018}. The proof in our generic setting follows from similar arguments.
\end{proof}

\noindent Given $\mu\in\PP(X)$ and $\nu\in\PP(Y)$, let us consider the function $L: \Pimn \to \FFmn$ defined as
\begin{equation}
\label{def:Lop}
    L(\pi)=(\projXy(\pi),\projxY(\pi)).
\end{equation}
By the chain rule \ref{lm:chainrule}, it is easy to see that $L(\pi) \in \FFmn$ for each $\pi\in\Pimn$. Vice versa, given $\ff\in\FFmn$, by using the Gluing Lemma (Lemma \ref{lm:gluing}), we find $\pi\in\Pimn$ such that $L(\pi)=\ff$. This proves the identity $\FFmn=L(\Pimn)$. The function $L$ allows us to relate $\T$ and $\ST$, as it follows
\[
\T(\pi)=\ST(L(\pi)), \qquad \qquad \forall \pi \in \Pimn,
\]
which, in conjunction with the identity $L(\Pimn) =\FFmn$, allows us to conclude that the infimum of $\ST$ is actually a minimum and that the set of minimizers of $\ST$ coincides with the image of $\Gamma_o(\mu,\nu)$ through $L$. In particular, the cardinal flow problem inherits the uniqueness of the solution from the Optimal Transportation problem.

\begin{corollary}
If the optimal transportation plan is unique, so is the optimal cardinal flow.
\end{corollary}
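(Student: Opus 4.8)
The plan is to leverage the bijection-like relationship between transportation plans and cardinal flows established just before the statement, namely that $L:\Pimn\to\FFmn$ satisfies $L(\Pimn)=\FFmn$ and $\T(\pi)=\ST(L(\pi))$ for every $\pi\in\Pimn$. The corollary asserts that uniqueness of the optimal transportation plan implies uniqueness of the optimal cardinal flow, so the natural strategy is a contrapositive or direct argument: assume $\Gamma_o(\mu,\nu)=\{\pi^*\}$ is a singleton and show the minimizer of $\ST$ over $\FFmn$ is unique. Since the preceding discussion already records that the set of minimizers of $\ST$ coincides with $L(\Gamma_o(\mu,\nu))$, the whole claim reduces to showing that $L$ sends the singleton $\{\pi^*\}$ to a singleton, i.e. that $L(\Gamma_o(\mu,\nu))=\{L(\pi^*)\}$ has exactly one element.

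First I would invoke the identity, proved in the paragraph above the corollary, that $\ST$ attains its infimum and that its set of minimizers is exactly $L(\Gamma_o(\mu,\nu))$. This is the workhorse fact and I would simply cite it. Second, I would observe that if $\Gamma_o(\mu,\nu)=\{\pi^*\}$ contains a single element, then its image $L(\Gamma_o(\mu,\nu))=\{L(\pi^*)\}$ is automatically a single element as well, because the image of a singleton under any function is a singleton. Combining these two observations gives that $\FFmn$ has a unique minimizer of $\ST$, which is precisely the statement that the optimal cardinal flow is unique. The argument is essentially immediate once the structural facts about $L$ are in hand.

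The only point requiring genuine care is that a function need not be injective, so a priori one might worry that uniqueness of the plan does not transfer cleanly; but this concern is moot here because we are mapping \emph{forward} from plans to flows, and the forward image of a one-point set is always a one-point set regardless of injectivity. The subtlety would instead arise in the converse direction (uniqueness of the flow implying uniqueness of the plan), which is \emph{not} claimed, since distinct plans can glue to the same pair of marginals. Thus the main obstacle is really just conceptual bookkeeping: making sure the reader sees that the set of optimal flows is literally $L(\Gamma_o(\mu,\nu))$ and not merely contained in it, so that no extra optimal flows can sneak in outside the image. Since that equality was already established via $L(\Pimn)=\FFmn$ together with $\T=\ST\circ L$, the corollary follows with a one- or two-line proof, and I would write it as such rather than expanding any computation.
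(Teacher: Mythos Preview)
Your proposal is correct and matches the paper's approach exactly: the paper states the corollary with no separate proof, treating it as an immediate consequence of the preceding observation that the set of minimizers of $\ST$ equals $L(\Gamma_o(\mu,\nu))$. Your write-up simply makes explicit the one-line reasoning the paper leaves implicit.
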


\begin{remark}
\label{rmk:surjL}
Since the operator $L$ is only surjective and not injective, the reverse implication is not true, i.e., given an optimal cardinal flow $\ff$, there might exist a plethora of optimal transportation plans such that $L(\pi)=\ff$.
\end{remark}


\begin{definition}[Intermedium Measures]
\label{def:intermediate_measures}
Let $\mu \in \probX$ and $\nu \in \probY$. We define the set of intermedium measures between $\mu$ and $\nu$ as
\[
\II(\mu,\nu):=\bigg\{ \lambda \in \PP(Y_1\times X_2) \text{ s.t. } \projxx(\lambda)=\mu_2 \text{ and }\projyy(\lambda)=\nu_1 \bigg\}.
\]
\end{definition}

\begin{definition}
Given $\lambda \in \IImn$, we say that the cardinal flow $\ff \in \FFmn$ glues on $\lambda$ if
\[
\projxy \funo = \projxy \fdue =\lambda.
\]
\end{definition}


\begin{definition}[Pivot Measure]
Let us take $\mu\in \probX$, $\nu \in \probY$, and $c$ a separable cost function. We say that $\zeta \in \PP(Y_1\times X_2)$ is a \emph{pivot measure} between $\mu$ and $\nu$ if there exists an optimal cardinal flow $\ff$ that glues on it.
\end{definition}

\begin{remark}
\label{rmk:pivotisinter}
From Lemma \ref{lm:chainrule}, we have that all the pivot measures are also intermediate measures.
\end{remark}

\begin{theorem}
\label{th::conditionaltransport}
Let us take $\mu\in\PP(X)$, $\nu\in\PP(Y)$, and $c=c_1+c_2$ a separable cost function. For any pivot measure $\zeta$, it holds true the formula
\begin{equation}
\label{frm::Wcdecomposition}
W_c(\mu,\nu)=\int_{X_2}W_{c_1}(\mu_{|x_2},\zeta_{|x_2})d\mu_2+\int_{Y_1}W_{c_2}(\zeta_{|y_1},\nu_{|y_1})d\nu_1.
\end{equation}
\end{theorem}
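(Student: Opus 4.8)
The plan is to prove the two inequalities separately, using the equivalence between the transport and cardinal-flow problems established in Theorem \ref{th:cardflows}, namely $W_c(\mu,\nu)=\infcf \ST\ff$. Throughout, the central observation is that a first cardinal flow $\funo\in\PP(X\times Y_1)=\PP(X_1\times X_2\times Y_1)$ gluing on $\zeta$ is simultaneously coupled to $\mu$ on $X=X_1\times X_2$ and to $\zeta$ on $Y_1\times X_2$, and that both of these marginals project onto the same law $\mu_2$ on the shared coordinate $X_2$ (recall that pivots are intermediate measures by Remark \ref{rmk:pivotisinter}, so $\projxx\zeta=\mu_2$ and $\projyy\zeta=\nu_1$). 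Disintegrating everything over $X_2$ will therefore turn $\funo$ into a measurable family of couplings between the conditional laws $\mu_{|x_2}$ and $\zeta_{|x_2}$, and symmetrically $\fdue$ into couplings between $\zeta_{|y_1}$ and $\nu_{|y_1}$.

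For the inequality $W_c(\mu,\nu)\geq$ (right-hand side of \eqref{frm::Wcdecomposition}), I would fix an optimal cardinal flow $\ff$ gluing on $\zeta$ — one exists by the very definition of a pivot measure. Disintegrating $\funo$ with respect to the projection onto $X_2$ yields $\funo=\funo_{|x_2}\otimes\mu_2$, with $\funo_{|x_2}\in\PP(X_1\times Y_1)$. The first step is to check that for $\mu_2$-a.e.\ $x_2$ the conditional $\funo_{|x_2}$ has first marginal $\mu_{|x_2}$ and second marginal $\zeta_{|x_2}$; this follows from the essential uniqueness of disintegration applied to the marginal condition \eqref{def:cond1} and the gluing identity $\projxy\funo=\zeta$. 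Consequently $\funo_{|x_2}\in\Pi(\mu_{|x_2},\zeta_{|x_2})$, so that $\int_{X_1\times Y_1}c_1\,d\funo_{|x_2}\geq W_{c_1}(\mu_{|x_2},\zeta_{|x_2})$. Using the disintegration formula \eqref{eq:disintegrationformula} to write $\ST^{(1)}(\funo)=\int_{X_2}\big(\int_{X_1\times Y_1}c_1\,d\funo_{|x_2}\big)d\mu_2$ and integrating the pointwise bound gives $\ST^{(1)}(\funo)\geq\int_{X_2}W_{c_1}(\mu_{|x_2},\zeta_{|x_2})\,d\mu_2$. The identical argument, disintegrating $\fdue$ over $Y_1$, yields $\ST^{(2)}(\fdue)\geq\int_{Y_1}W_{c_2}(\zeta_{|y_1},\nu_{|y_1})\,d\nu_1$. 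Summing and using optimality, $W_c(\mu,\nu)=\ST\ff\geq$ (right-hand side).

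For the reverse inequality, I would build a competitor cardinal flow attaining the right-hand side. Lemma \ref{lm:measselplans} applied to the measurable assignments $x_2\mapsto(\mu_{|x_2},\zeta_{|x_2})$ and $y_1\mapsto(\zeta_{|y_1},\nu_{|y_1})$ produces measurable families $x_2\mapsto\gamma^{(1)}_{x_2}$ and $y_1\mapsto\gamma^{(2)}_{y_1}$ of optimal plans in $\Gamma_o(\mu_{|x_2},\zeta_{|x_2})$ and $\Gamma_o(\zeta_{|y_1},\nu_{|y_1})$, respectively. Gluing these over the respective base variables, i.e.\ setting $\funo:=\gamma^{(1)}_{x_2}\otimes\mu_2$ and $\fdue:=\gamma^{(2)}_{y_1}\otimes\nu_1$, produces measures on $X\times Y_1$ and $X_2\times Y$ whose relevant marginals are, by construction, $\mu$, $\nu$, and $\zeta$ (the last being common to both precisely because $\projxx\zeta=\mu_2$ and $\projyy\zeta=\nu_1$). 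Hence $\ff\in\FFmn$ glues on $\zeta$, and its cost equals $\int_{X_2}W_{c_1}(\mu_{|x_2},\zeta_{|x_2})\,d\mu_2+\int_{Y_1}W_{c_2}(\zeta_{|y_1},\nu_{|y_1})\,d\nu_1$. Since $W_c(\mu,\nu)=\infcf\ST\ff$ is an infimum over all cardinal flows, it is bounded above by the cost of this particular competitor, which is exactly the right-hand side; the two inequalities then give \eqref{frm::Wcdecomposition}.

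The main obstacle I expect is purely measure-theoretic: justifying that disintegration is compatible with taking marginals, so that the conditional $\funo_{|x_2}$ really is a coupling of $\mu_{|x_2}$ and $\zeta_{|x_2}$ for almost every $x_2$, and dually that the glued measures $\gamma^{(1)}_{x_2}\otimes\mu_2$ are well defined and carry the claimed marginals. This requires invoking the essential uniqueness of disintegrations on Polish spaces together with the measurability hypotheses needed to apply Lemma \ref{lm:measselplans} — in particular that $x_2\mapsto\mu_{|x_2}$, $x_2\mapsto\zeta_{|x_2}$ are measurable maps into $\PP(X_1)$ and $\PP(Y_1)$, that the analogous maps in $y_1$ are measurable, and that $c_1$ and $c_2$ meet the continuity and lower-boundedness requirements of the selection lemma. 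Once these bookkeeping points are settled, the argument closes.
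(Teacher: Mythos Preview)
Your proposal is correct and follows essentially the same approach as the paper: disintegrate an optimal cardinal flow gluing on $\zeta$ over $X_2$ and $Y_1$ to get the lower bound, and use measurable selection of conditionally optimal plans (Lemma \ref{lm:measselplans}) to build a competitor cardinal flow achieving the right-hand side for the upper bound. Your presentation is in fact slightly more careful than the paper's in one respect: you explicitly take, for the inequality $W_c(\mu,\nu)\geq\text{RHS}$, an \emph{optimal} cardinal flow gluing on the given pivot $\zeta$, whereas the paper's wording in that step (``let us take $\ff\in\FFmn$'' and then ``let $\zeta$ be the measure on which $\funo$ and $\fdue$ glue'') somewhat obscures that this is the intended choice.
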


\begin{proof}
Let $\zeta$ be a pivot measure between $\mu$ and $\nu$. From Remark \ref{rmk:pivotisinter} we know that $\zeta \in\IImn$. The Disintegration Theorem \ref{th:disintegration} allows us to write
\begin{equation}
\label{eq:disinthZ1}
  \zeta=\sigu\otimes \mu_2  \quad \quad \text{and} \quad \quad
   \zeta=\sigd \otimes \nu_1. 
\end{equation}
Similarly, we decompose $\mu$ and $\nu$ as
\begin{equation}
\label{eq:disinthZ3}
   \mu=\mu_{|x_2}\otimes\mu_2 \quad \quad \text{and} \quad \quad  \nu=\nu_{|y_1}\otimes\nu_1  
\end{equation}
respectively.
For $\mu_2-$almost every $x_2\in X_2$ is then well defined the problem
\begin{equation*}
    W_{c_1}(\mu_{|x_2},\sigu)=\inf_{\pi_{|x_2}\in \Pi(\mu_{|x_2},\sigu)}\int_{X_1\times Y_1}c_1d\pi_{|x_2}.
\end{equation*}

\noindent Theorem \ref{th:disintegration} assures us that the selections $x_2\to\mu_{|x_2}$ and $x_2\to \sigu$ are both measurable, hence, according to Lemma \ref{lm:measselplans}, there exists a measurable selection of optimal plans $\pi_{|x_2}$ for which holds true
\begin{equation}
    \label{eq:W1deq1}
    W_{c_1}(\mu_{|x_2},\sigu)=\int_{X_1\times Y_1}c_1d\pi_{|x_2}
\end{equation}
\noindent $\mu_2-$almost everywhere $x_2\in X_2$.
Similarly, there exists a measurable selection $\pi_{|y_1}$ for which, for $\nu_1-$almost every $y_1\in Y_1$, holds true
\begin{equation}
\label{eq:W1deq2}
W_{c_2}(\sigd,\nu_{|y_1})=\int_{X_2\times Y_2}c_2d\pi_{|y_1}.
\end{equation}
\noindent Let us now consider the measures $\funo\in \PP(X\times Y_1)$ and $\fdue\in \PP(X_2\times Y)$, defined as it follows
\begin{equation}
\label{eq:funoindip}
 \funo = \pi_{|x_2}\otimes\mu_2   \quad\quad\text{and}\quad\quad
    \fdue = \pi_{|y_1}\otimes\nu_1.
\end{equation}
The couple $\ff$ is a cardinal flow between $\mu$ and $\nu$, in fact, given an $\phi \in L^1_\mu$, we have
\begin{align*}
\int_{X}\phi \;d\mu&=\int_{X_2}\bigg(\int_{X_1\times Y_1}\phi\; d\mu_{|x_2} \bigg)\;d \mu_2\\
                &= \int_{X_2}\bigg(\int_{X_1\times Y_1}\phi \;d(\pr{X_1}_\#\pi_{|x_2}) \bigg)\;d \mu_2 \\
                &=\int_{X_2} \bigg(\int_{X_1\times Y_1}\phi\circ(\pr{X_1},Id_{X_2}))\;d\pi_{|x_2} \bigg)\;d \mu_2\\
                &= \int_{X\times Y_1}\phi\circ\pr{X} \;d \funo \\
                &= \int_{X}\phi\; d (\pr{X}_\# \funo) ,
\end{align*}
hence, $\pr{X}_\# \funo=\mu$. Similarly, we get
\begin{equation*}
    \pr{Y_1\times X_2}_\# \funo =\zeta,\quad\quad \pr{Y_1\times X_2}_\# \fdue=\zeta, \quad\quad \pr{Y}_\#\fdue=\nu,
\end{equation*}
%
hence $\ff \in \FFmn$. From the identities \eqref{eq:W1deq1} and \eqref{eq:W1deq2}, we have
\begin{align*}
    \int_{X_2}W_{c_1}(\mu_{|x_2},&\zeta_{|x_2})d\mu_2 + \int_{Y_1}W_{c_2}(\zeta_{|y_1},\nu_{|y_1})d\nu_1\\ &=\int_{X_2}\int_{X_1\times Y_1}c_1\;d\pi_{|x_2} d\mu_2+\int_{Y_1}\int_{X_2\times Y_2}c_2\;d\pi_{|y_1} d\nu_1\\
   &= \int_{X\times Y_1}c_1\;df^{(1)}+\int_{X_2\times Y}c_2\;df^{(2)},
\end{align*}
so that
\begin{align}
    \label{primadisug}
    \nonumber\int_{X_2}W_{c_1}(\mu_{|x_2},\zeta_{|x_2})d\mu_2 +\int_{Y_1}&W_{c_2}(\zeta_{|y_1},\nu_{|y_1})d\nu_1\\
    &\geq \min_{\ff} \ST\ff .
\end{align}

\noindent To prove the other inequality, let us take $\ff \in \FFmn$. By definition, we have
\[
\pr{X_2}_\# \funo =\pr{X_2}_\#\mu=\mu_2
\]
and
\[
\pr{Y_1}_\# \fdue =\pr{Y_1}_\#\nu=\nu_{1}.
\]
By disintegrating $\funo$ and $\fdue$ with respect to the variable $x_2$ and $y_1$, respectively, we find
\[
\funo=\psi_{|x_2}\otimes\mu_2 \quad \quad \text{and} \quad \quad\fdue=\phi_{|y_1}\otimes\nu_{1}.
\]
Let $\zeta$ be the measure on which $\funo$ and $\fdue$ glue, we have $\psi_{|x_2}\in\Pi(\mu_{|x_2},\zeta_{|x_2})$ $\mu_2-$a.e. and $\phi_{|y_1}\in\Pi(\zeta_{|y_1},\nu_{|y_1})$ $\nu_1-$a.e., indeed
\[
\mu=\pr{X}_\#\funo =\pr{X}_\#(\psi_{|x_2}\otimes\mu_2) =\big(\pr{X}_\#\psi_{|x_2}\big)\otimes\mu_2
\]
so that, by uniqueness of the conditional law, we have 
\[
\pr{X}_\#\psi_{|x_2}=\mu_{|x_2}\quad\quad \text{and}\quad \quad
\pr{Y_1\times X_2}_\#\psi_{|x_2}=\zeta_{|x_2},
\]
therefore, $\psi_{|x_2}\in\Pi(\mu_{|x_2},\zeta_{|x_2})$. Similarly, we find $\phi_{|y_1}\in\Pi(\zeta_{|y_1},\nu_{|y_1})$. Then, we have
\begin{eqnarray}
\label{thm::magg1}
\nonumber \int_{X\times Y_1}c^{(1)}\;d\funo&=&\int_{X_2}\int_{X_1\times  Y_1}c^{(1)}\;d\psi_{|x_2}d\mu_2\\
&\geq&\int_{X_2}W_{c_1}(\mu_{|x_2},\zeta_{|x_2})d\mu_2
\end{eqnarray}
and
\begin{eqnarray}
\label{thm::magg2}
\nonumber \int_{X_2\times Y}c^{(2)}\;d\fdue&=&\int_{Y_1}\int_{X_2\times  Y_2}c^{(2)}\;d\phi_{|y_1}d\nu_1\\
&\geq&\int_{Y_1}W_{c_2}(\zeta_{|y_1},\nu_{|y_1})d\nu_1.
\end{eqnarray}
By summing up the relations \eqref{thm::magg1} and \eqref{thm::magg2} we conclude
\[
W_c(\mu,\nu)\geq\int_{X_2}W_{c_1}(\mu_{|x_2},\zeta_{|x_2})d\mu_2+\int_{Y_1}W_{c_2}(\zeta_{|y_1},\nu_{|y_1})d\nu_1,
\]
which, along with \eqref{primadisug}, concludes the proof.
\end{proof}

 \noindent From formula \eqref{eq:disinthZ1}, we deduce that the computation of the Wasserstein cost between two generic measures can be achieved in two steps: detecting the pivotal measure and solving a family of lower dimensional problems. In particular, if we are able to solve the lower dimensional transportation problems, the only unknown left to determine is the pivot measure.

\begin{definition}[Pivoting functional]
Given $\mu\in\PP(X)$, $\nu\in\PP(Y)$, and $c=c_1+c_2$ a separable cost function, we define the pivoting functional $\Z:\IImn \to \erre$ as
\begin{equation}
\label{Zfunctional}
\Z:\zeta \to \int_{X_2}W_{c_1}(\mu_{|x_2},\zeta_{|x_2})d\mu_2+\int_{Y_1}W_{c_2}(\zeta_{|y_1},\nu_{|y_1})d\nu_1.
\end{equation}
\end{definition}



\begin{lemma}
Let $\mu\in\PP(X)$, $\nu\in\PP(Y)$, and $\lambda \in \IImn$. Then, there exists $\ff\in\FFmn$ such that
\[
\prxy_\# \funo =\lambda = \prxy_\# \fdue.
\]
\end{lemma}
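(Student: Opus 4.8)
The plan is to produce $\funo$ and $\fdue$ by two independent applications of the Gluing Lemma (Lemma \ref{lm:gluing}) and then simply read off the three defining conditions of a cardinal flow. For the first flow, I would glue $\mu$ (viewed as a measure on $X_1\times X_2$) with $\lambda$ (viewed as a measure on $X_2\times Y_1$) along their common factor $X_2$. The compatibility hypothesis of Lemma \ref{lm:gluing} is met because both measures have marginal $\mu_2$ on $X_2$: for $\mu$ this is the definition of $\mu_2=\projxx\mu$, while for $\lambda$ it is exactly the first membership condition $\projxx\lambda=\mu_2$ coming from $\lambda\in\IImn$. Applying the lemma with $(Z_1,Z_2,Z_3)=(X_1,X_2,Y_1)$, $\pi_1=\mu$, $\pi_2=\lambda$ yields a measure $\funo\in\PP(X\times Y_1)$ with $\projX\funo=\mu$ and $\prxy_\#\funo=\lambda$.

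Symmetrically, I would construct $\fdue$ by gluing $\lambda$ (on $X_2\times Y_1$) with $\nu$ (on $Y_1\times Y_2$) along their common factor $Y_1$. Here the compatibility hypothesis holds because both measures have marginal $\nu_1$ on $Y_1$: for $\lambda$ this is the second membership condition $\projyy\lambda=\nu_1$ defining $\IImn$, and for $\nu$ it is the definition of $\nu_1=\projyy\nu$. Lemma \ref{lm:gluing} with $(Z_1,Z_2,Z_3)=(X_2,Y_1,Y_2)$, $\pi_1=\lambda$, $\pi_2=\nu$ then produces a measure $\fdue\in\PP(X_2\times Y)$ with $\prxy_\#\fdue=\lambda$ and $\projY\fdue=\nu$.

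It then remains to verify that $\ff$ is a cardinal flow and glues on $\lambda$. Condition \eqref{def:cond1}, namely $\projX\funo=\mu$, and condition \eqref{def:cond2}, namely $\projY\fdue=\nu$, are immediate from the two constructions; condition \eqref{def:cond3} reads $\projxy\funo=\projxy\fdue$, and both sides equal $\lambda$ by construction, which simultaneously delivers the asserted identity $\prxy_\#\funo=\lambda=\prxy_\#\fdue$. I expect no genuine obstacle here: the entire content reduces to matching factor spaces and to observing that the two marginal-compatibility hypotheses required by the Gluing Lemma are precisely the two conditions defining $\IImn$. The only care needed is bookkeeping of the coordinate orderings, i.e.\ the harmless identification of $\PP(X_2\times Y_1)$ with $\PP(Y_1\times X_2)$ so that the projections $\prxy_\#$ and $\projxy$ refer to the same marginal.
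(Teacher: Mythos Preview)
Your proof is correct. It differs from the paper's argument only in packaging: you invoke the Gluing Lemma (Lemma~\ref{lm:gluing}) as a black box, whereas the paper carries out the construction by hand via disintegration, setting
\[
\funo=\big(\lambda_{|x_2}\otimes\mu_{|x_2}\big)\otimes\mu_2
\quad\text{and}\quad
\fdue=\big(\lambda_{|y_1}\otimes\nu_{|y_1}\big)\otimes\nu_1.
\]
Since the standard proof of the Gluing Lemma is precisely this conditionally-independent product of disintegrations, the two arguments are essentially the same under the hood. Your route is shorter and makes clearer that the lemma is an immediate consequence of a result already stated in the paper; the paper's route has the minor advantage of exhibiting an explicit formula for $\ff$, which echoes the conditionally-independent structure used later in Lemma~\ref{lm:conditional_law}. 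Your remark about the coordinate swap $Y_1\times X_2\leftrightarrow X_2\times Y_1$ is the only bookkeeping needed, and it is harmless.
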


\begin{proof}
Let $\lambda \in \IImn$. By the disintegrating $\lambda$, we get
\[
\lambda =\lambda_{|x_2}\otimes \lambda_2=\lambda_{|x_2}\otimes\mu_2\quad \quad \text{and}\quad \quad\mu =\mu_{|x_2}\otimes\mu_2.
\]
We define $\funo\in \PPXy$ as
\[
\funo=\big(\lambda_{|x_2}\otimes\mu_{|x_2}\big)\otimes\mu_2.
\]
It is easy to see that $\projX \funo=\mu$ and $\projXy\funo=\lambda$. Similarly, we define $\fdue$ as
\[
\fdue = \big(\lambda_{|y_1}\otimes \nu_{|y_1} \big)\otimes\nu_1,
\]
so that
\[
\nu = \projY \fdue \quad \quad \text{and}\quad \quad \lambda = \projxy \fdue,
\]
hence $\ff\in\FFmn$.
\end{proof}

\begin{theorem}
Given $\mu,\nu\in\PP(\erre^2)$, and $c=c_1+c_2$ a separable cost function, it holds true that
\[
W_c(\mu,\nu)=\inf_{\zeta \in \IImn} \Z (\zeta).
\]
\end{theorem}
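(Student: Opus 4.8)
The plan is to establish the two inequalities between $W_c\mn$ and $\inf_{\zeta\in\IImn}\Z(\zeta)$ separately, reusing the machinery already built for the two previous theorems, namely Theorem \ref{th:cardflows} and Theorem \ref{th::conditionaltransport}.

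For the inequality $\inf_{\zeta\in\IImn}\Z(\zeta)\ge W_c\mn$, I would argue that $\Z(\zeta)\ge W_c\mn$ holds for \emph{every} $\zeta\in\IImn$ and then pass to the infimum. Fix $\zeta\in\IImn$; the decisive remark is that the construction in the first half of the proof of Theorem \ref{th::conditionaltransport} never really exploits that $\zeta$ is a pivot measure, only that $\zeta\in\IImn$, which is exactly what makes the disintegrations $\zeta=\sigu\otimes\mu_2=\sigd\otimes\nu_1$ available. Applying Lemma \ref{lm:measselplans} I would select measurable families of optimal plans $\pi_{|x_2}\in\Pi(\mu_{|x_2},\zeta_{|x_2})$ and $\pi_{|y_1}\in\Pi(\zeta_{|y_1},\nu_{|y_1})$, set $\funo=\pi_{|x_2}\otimes\mu_2$ and $\fdue=\pi_{|y_1}\otimes\nu_1$, and check verbatim that $\ff\in\FFmn$ glues on $\zeta$ and satisfies $\ST\ff=\Z(\zeta)$. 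Since $\ff$ is admissible, Theorem \ref{th:cardflows} gives $\Z(\zeta)=\ST\ff\ge\inf_{F\in\FFmn}\ST(F)=W_c\mn$, and taking the infimum over $\zeta$ closes this half.

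For the reverse inequality $\inf_{\zeta\in\IImn}\Z(\zeta)\le W_c\mn$, I would produce a single intermediate measure realizing $W_c\mn$. The discussion following Theorem \ref{th:cardflows} guarantees that the infimum of $\ST$ over $\FFmn$ is attained, so there is an optimal cardinal flow $F^\ast$; let $\zeta^\ast$ be its common marginal on $Y_1\times X_2$. Then $\zeta^\ast$ is by definition a pivot measure, hence $\zeta^\ast\in\IImn$ by Remark \ref{rmk:pivotisinter}, and Theorem \ref{th::conditionaltransport} applies to give $\Z(\zeta^\ast)=W_c\mn$. Therefore $\inf_{\zeta\in\IImn}\Z(\zeta)\le\Z(\zeta^\ast)=W_c\mn$, and combined with the first half this yields the claimed identity.

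The main obstacle is exactly the one already met in Theorem \ref{th::conditionaltransport}: securing the measurability of the selections $x_2\mapsto\pi_{|x_2}$ and $y_1\mapsto\pi_{|y_1}$ that glue the one-dimensional optimal plans into bona fide cardinal flows, which is handled by Lemma \ref{lm:measselplans}. A second, milder point is whether an optimal cardinal flow (equivalently, a pivot measure) genuinely exists under the standing hypotheses. If one prefers to avoid invoking attainment, the second inequality can be obtained directly: for any $\pi\in\Pimn$, writing $L(\pi)=(\funo,\fdue)$ as in \eqref{def:Lop} and letting $\zeta_\pi$ be its glue, the estimate from the second half of the proof of Theorem \ref{th::conditionaltransport} gives $\Z(\zeta_\pi)\le\ST(L(\pi))=\T(\pi)$; taking the infimum over $\pi$ then yields $\inf_{\zeta\in\IImn}\Z(\zeta)\le W_c\mn$ with no appeal to existence of minimizers.
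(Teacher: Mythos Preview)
Your proposal is correct and follows essentially the same approach as the paper: one inequality comes from observing that the construction of a cardinal flow from $\zeta$ in the proof of Theorem~\ref{th::conditionaltransport} only requires $\zeta\in\IImn$, yielding $\Z(\zeta)\ge W_c\mn$ for every intermediate measure; the other comes from applying Theorem~\ref{th::conditionaltransport} to a pivot measure. The paper presents the two halves in the opposite order but the content is the same, and your alternative argument for $\inf\Z\le W_c$ via $\Z(\zeta_\pi)\le\T(\pi)$ is a useful variant that sidesteps existence of minimizers.
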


\begin{proof}
Since any pivot measures $\zeta$ is an element of $\IImn$, Theorem \ref{th::conditionaltransport}, assure us
\[
W_c(\mu,\nu)\geq\inf_{\zeta \in \IImn} \Z (\zeta).
\]
To conclude we just need to prove the other inequality. 

\noindent Let us fix $\zeta \in \IImn$. Following the steps of the proof of Theorem \ref{th::conditionaltransport}, we disintegrate $\mu$, $\zeta$, and $\nu$ (see \eqref{eq:disinthZ1}-\eqref{eq:disinthZ3}), find the optimal transportation plans between the conditional measures, and define the cardinal flow as done in \eqref{eq:funoindip}. Since the couple $\ff \in \FFmn$, we have
\begin{eqnarray*}
\Z(\zeta)=\ST\ff\geq \inf_{\ff \in \FFmn} \ST\ff,
\end{eqnarray*}
hence
\[
\infIp \Z(\zeta) \geq \inf_{\ff \in \FFmn}\ST\ff=W_c(\mu,\nu),
\]
which concludes the proof.
\end{proof}

\subsection{Independence of the Optimal Coupling}
\label{par:indip}
As we noticed in Remark \ref{rmk:surjL}, from an optimal transportation plan we determine one optimal cardinal flow, however the vice versa is not true. The next example showcases how, even if we have a unique pivot measure and a unique optimal cardinal flow, we might retrieve an infinity of optimal transportation plans.

\begin{example}
\begin{figure}
  
  
  
\subfloat[]{
	\begin{minipage}[c][1\width]{
	   0.24\textwidth}
	   \centering
	   \includegraphics[width=0.5\textwidth]{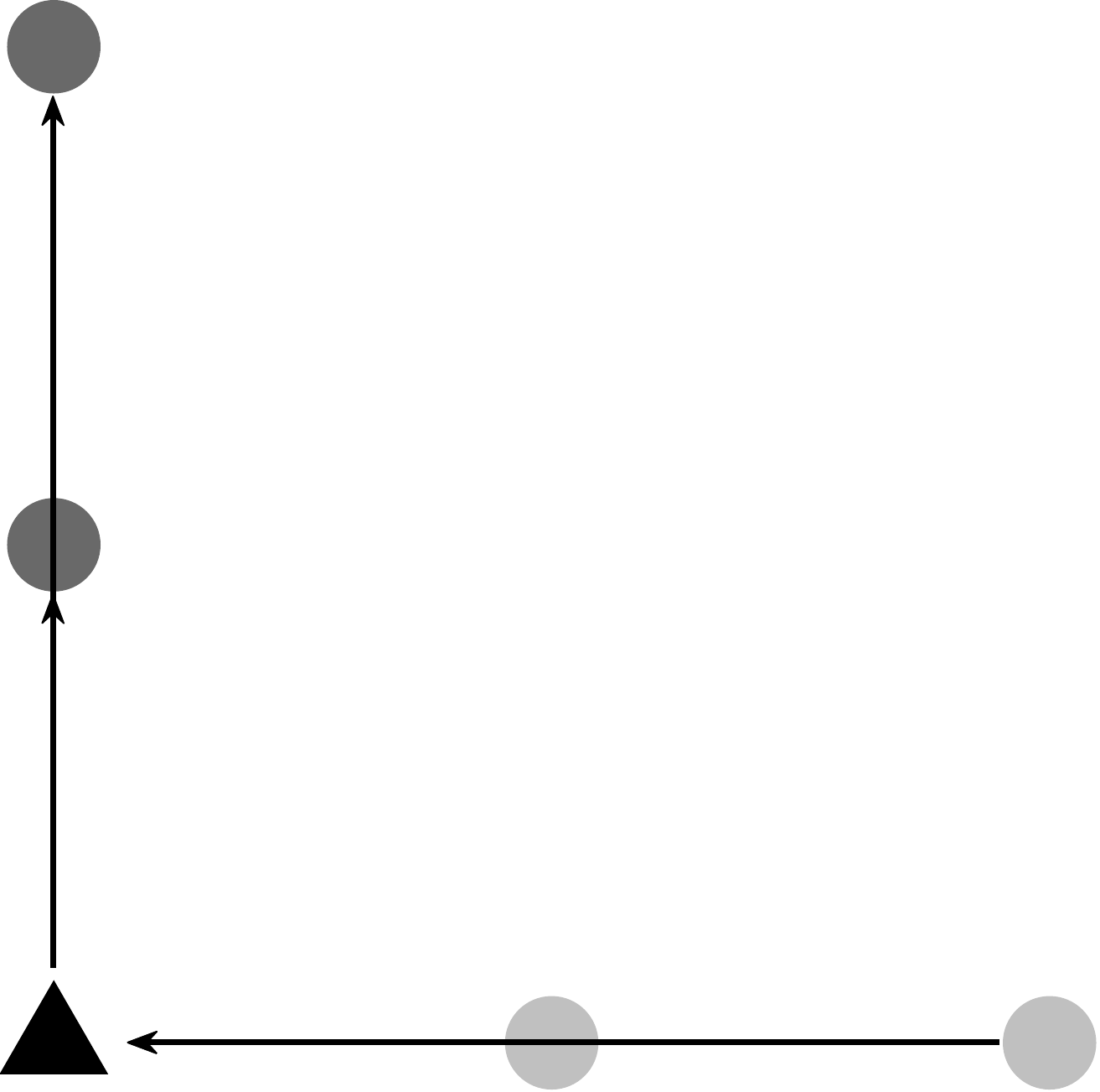}
	\end{minipage}}
\subfloat[]{
	\begin{minipage}[c][1\width]{
	   0.24\textwidth}
	   \centering
	   \includegraphics[width=0.5\textwidth]{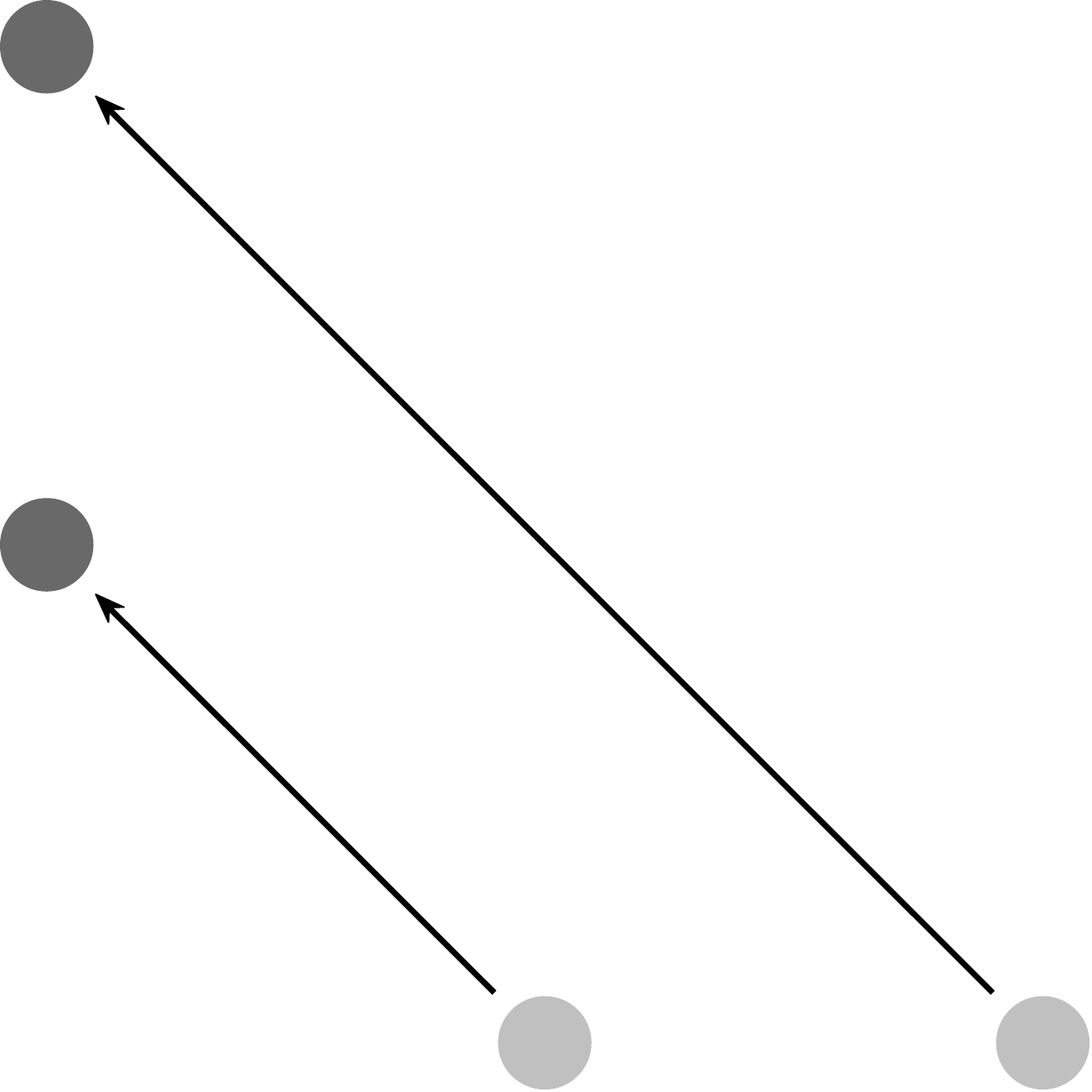}
	\end{minipage}}
  \subfloat[]{
	\begin{minipage}[c][1\width]{
	   0.24\textwidth}
	   \centering
	   \includegraphics[width=0.5\textwidth]{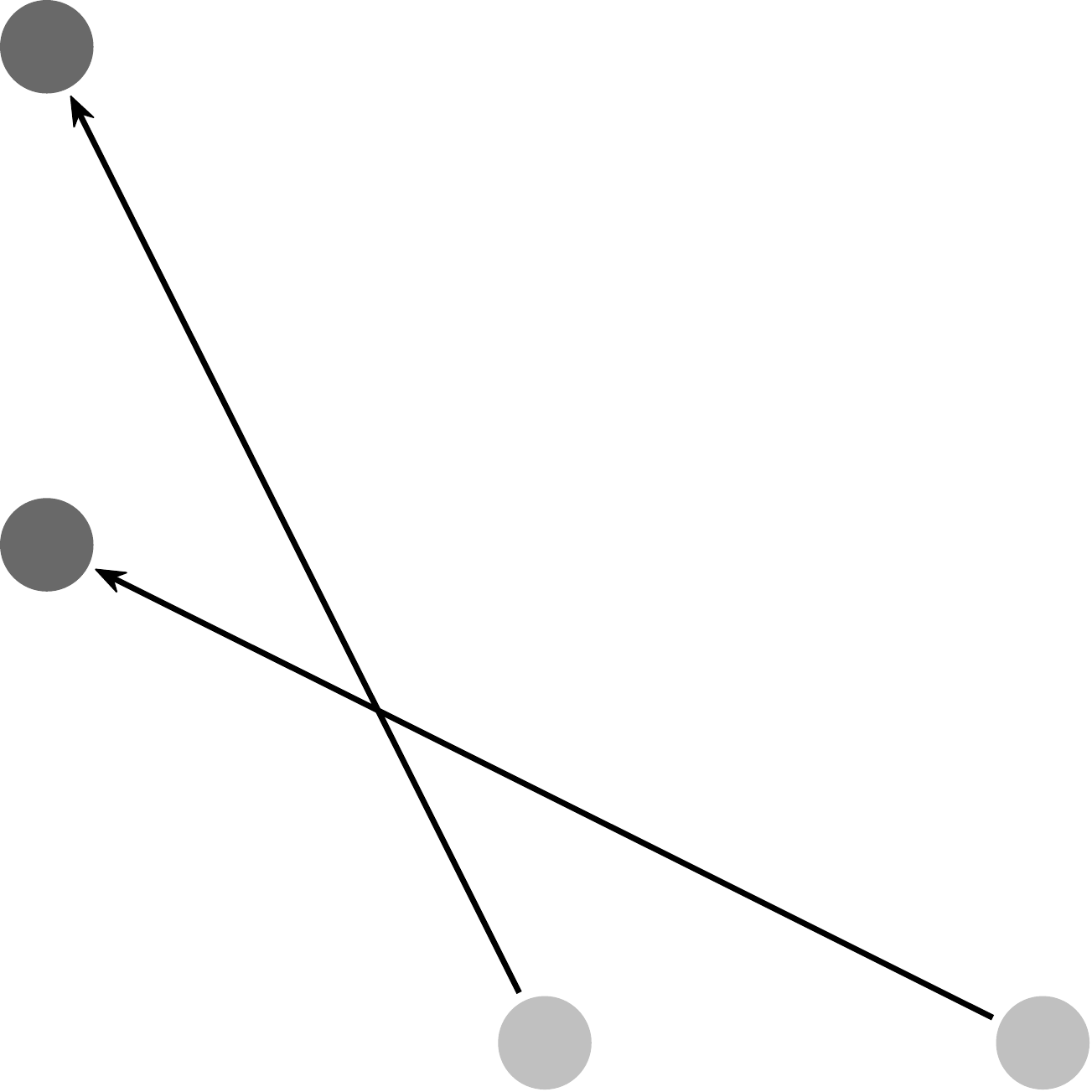}
	\end{minipage}}
  \subfloat[]{
	\begin{minipage}[c][1\width]{
	   0.24\textwidth}
	   \centering
	   \includegraphics[width=0.5\textwidth]{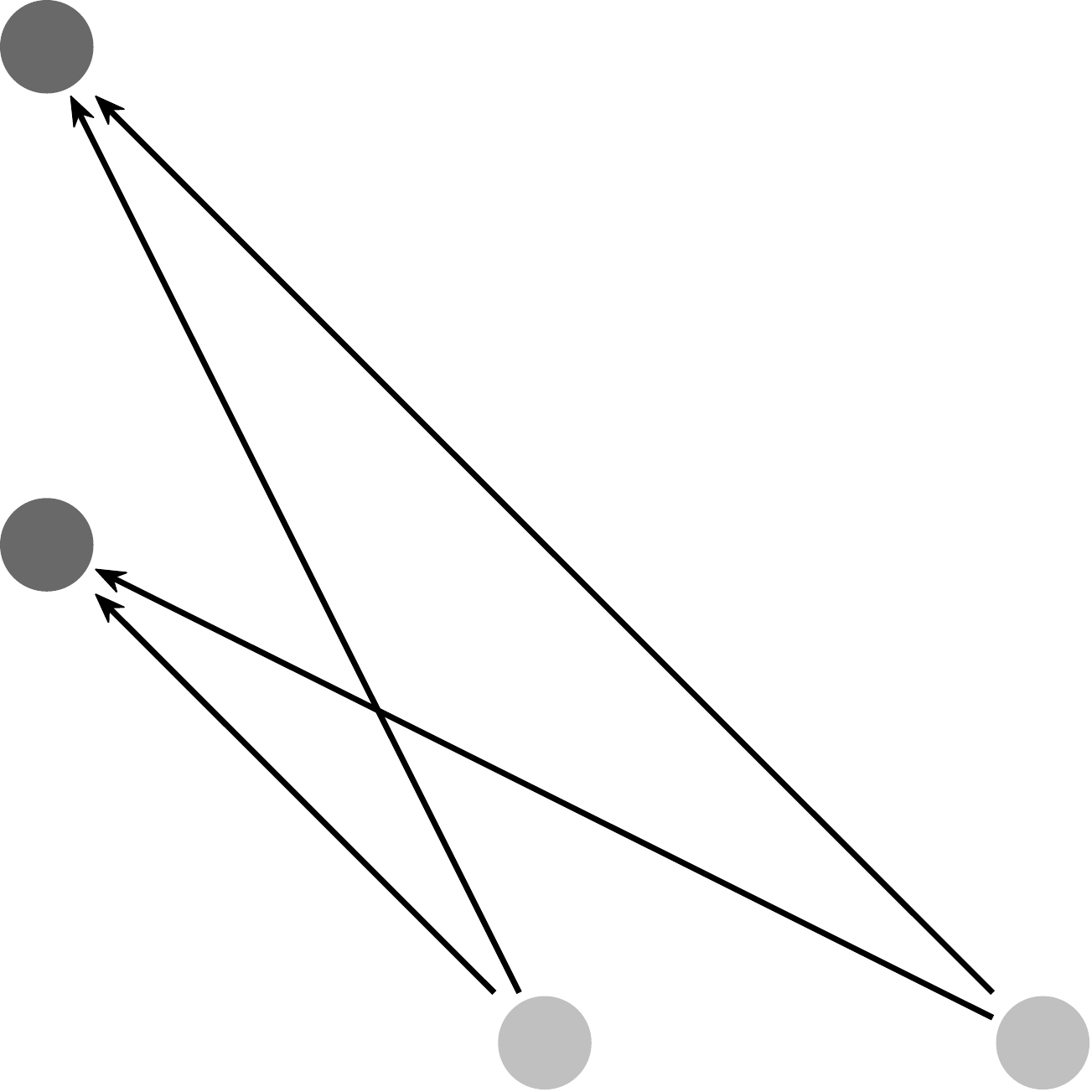}
	\end{minipage}}
\caption{The lack of uniqueness showcased in Example \ref{ex:switching}. The support of $\mu$ is indicated by light grey circles, the support of $\nu$ by dark grey circles. In Figure (a), we showcase the optimal cardinal flow. The support of the pivot measure $\zeta$ is indicated by the black triangle. In Figure (b), (c), and (d) are showcased the transportation plans $\pi_1$, $\pi_2$, and $\pi$, respectively. Every of those transportation plans induces the cardinal flow described in (a). }
\end{figure}

\label{ex:switching}
Let us take two probability measures on $\erre^2$, $\mu$ and $\nu$, defined as
\[
\mu:=\dfrac{1}{2}\bigg( \delta_{(1,0)}+ \delta_{(2,0)} \bigg)\quad \quad \text{and}\quad \quad\nu:=\dfrac{1}{2}\bigg( \delta_{(0,1)}+ \delta_{(0,2)}\bigg),
\]
where $\delta_{(x_1,x_1)}$ is the Dirac delta centered in $(x_1,x_2)\in\erre^2$. Since $\IImn=\{\delta_{(0,0)}\}$, the only possible pivot measure is $\zeta =\delta_{(0,0)}$, hence, the only (and therefore optimal) cardinal flow is
\[
\funo := \dfrac{1}{2}\bigg( \delta_{((1,0);0)} + \delta_{((2,0);0)}  \bigg)\quad\quad \text{and}\quad \quad\fdue := \dfrac{1}{2}\bigg( \delta_{((0,0);1)} + \delta_{((0,0);2)} \bigg).
\]
The measures $\pi,\pi_1$, and $\pi_2$, defined as
\begin{align*}
    &\pi:= \dfrac{1}{4}\bigg( \delta_{((1,0);(0,1))}+ \delta_{((1,0);(0,2))}+ \delta_{((2,0);(0,1))}+ \delta_{((2,0);(0,2))} \bigg),\\
    &\pi_1:=\dfrac{1}{2}\bigg(\delta_{((1,0);(0,1))}+\delta_{((2,0);(0,2))}  \bigg),\quad\;\text{and}\quad \;\pi_2:=\dfrac{1}{2}\bigg( \delta_{((1,0);(0,2))}+ \delta_{((2,0);(0,1))} \bigg)
\end{align*}
are all optimal transportation plans between $\mu$ and $\nu$, since they induce the same cardinal flow $\ff$. Moreover, any convex combination of $\pi_1$ and $\pi_2$ is an optimal transportation plan between $\mu$ and $\nu$.

\noindent The lack of uniqueness is due to a natural lack of memory. Roughly speaking, once the first cardinal flow $\funo$ allocates the mass into $(0,0)$, the mass coming from $(1,0)$ and $(2,0)$ merges in one point and loses its identity.
Therefore, when the second cardinal flow $\fdue$ reallocates the mass in $(0,0)$ and moves it vertically to complete the transportation we are unable to tell how much of the mass that ended in $(0,1)$, came from the point $(1,0)$ or $(2,0)$. The plans $\pi$, $\pi_1$, and $\pi_2$ are different for this reason: for $\pi$ just half of the mass in $(1,0)$ goes to $(0,1)$, for $\pi_1$ all the mass in $(1,0)$ goes to $(0,1)$ and, for $\pi_2$, none of the mass in $(1,0)$ goes to $(0,1)$.
\end{example}

\begin{lemma}
\label{lm:conditional_law}
Let us take $\mu\in\PP(X)$, $\nu\in\PP(Y)$, and $c: X\times Y \to [0,\infty)$ a separable cost function. Given $(\funo,\fdue)$ an optimal cardinal flow and $\zeta$ the pivotal measure related to $(\funo,\fdue)$, let $\funo_{|(x_2,y_1)}$ and $\fdue_{|(x_2,y_1)}$ be the conditional laws of $\funo$ and $\fdue$ given $\zeta$. Then, any measurable family of probability measures $\gamma_{(x_2,y_1)}$ satisfying
\begin{equation}
\label{eq:piopt}
    \gamma_{(x_2,y_1)}\in \Pi(\funo_{|(x_2,y_1)},\fdue_{|(x_2,y_1)}), 
\end{equation}
is such that
\[
\gamma_{(x_2,y_1)}\otimes \zeta\in \Gamma_o(\mu,\nu).
\]
In particular, the transportation plan
\begin{equation}
\label{lm:piopt}
\pi:=  \big( \funo_{|(x_2,y_1)}\otimes\fdue_{|(x_2,y_1)} \big)  \otimes \zeta
\end{equation}
minimizes $\T$.
\end{lemma}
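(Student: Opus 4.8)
The plan is to exhibit $\gamma_{(x_2,y_1)}\otimes\zeta$ as a competitor in $\Pimn$ whose transportation cost equals $\ST\ff$, and then to invoke the optimality of the cardinal flow together with Theorem \ref{th:cardflows}. First I would unwind the notation. Disintegrating $\funo$ and $\fdue$ along the projection onto $Y_1\times X_2$ (which pushes both onto $\zeta$, since $\ff$ glues on $\zeta$) writes $\funo=\funo_{|(x_2,y_1)}\otimes\zeta$ and $\fdue=\fdue_{|(x_2,y_1)}\otimes\zeta$, where the fibre measures satisfy $\funo_{|(x_2,y_1)}\in\PP(X_1)$ and $\fdue_{|(x_2,y_1)}\in\PP(Y_2)$. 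A coupling $\gamma_{(x_2,y_1)}\in\Pi(\funo_{|(x_2,y_1)},\fdue_{|(x_2,y_1)})$ is thus a measure on $X_1\times Y_2$, and, after the canonical reordering of coordinates $(X_1\times Y_2)\times(Y_1\times X_2)\cong X\times Y$, the glued measure $\gamma_{(x_2,y_1)}\otimes\zeta$, defined through the disintegration formula \eqref{eq:disintegrationformula}, is a Borel probability on $X\times Y$ (this requires only the assumed measurability of the family $\gamma_{(x_2,y_1)}$).

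The first thing to check is that $\gamma_{(x_2,y_1)}\otimes\zeta\in\Pimn$. I would test its $X$-marginal against an arbitrary $\phi\in C(X)$ using \eqref{eq:disintegrationformula}: since $\phi\circ\pr{X}$ does not depend on the $Y_2$-coordinate, the inner integral against $\gamma_{(x_2,y_1)}$ collapses to an integral against its first marginal $\funo_{|(x_2,y_1)}$, and the outer integral against $\zeta$ then reproduces exactly $\intXY\phi\circ\pr{X}\,d\funo=\intX\phi\,d\mu$, recalling $\projX\funo=\mu$. By the push-forward characterization \eqref{eq:pushforwardchar} this gives $\projX(\gamma_{(x_2,y_1)}\otimes\zeta)=\mu$; the symmetric computation, using that $\phi\circ\pr{Y}$ ignores the $X_1$-coordinate and that $\projY\fdue=\nu$, gives $\projY(\gamma_{(x_2,y_1)}\otimes\zeta)=\nu$. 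Hence the glued measure is an admissible transportation plan.

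The decisive step is the cost computation, and here separability does all the work. Writing $c=c_1+c_2$ and integrating $\gamma_{(x_2,y_1)}\otimes\zeta$ with the disintegration formula, the term $c_1(x_1,y_1)$ is insensitive to the $Y_2$-coordinate, so its integral against the fibre $\gamma_{(x_2,y_1)}$ only sees the first marginal $\funo_{|(x_2,y_1)}$; re-gluing with $\zeta$ yields $\intXY c_1\,d(\gamma_{(x_2,y_1)}\otimes\zeta)=\intXy c_1\,d\funo=\ST^{(1)}(\funo)$. Symmetrically $\intXY c_2\,d(\gamma_{(x_2,y_1)}\otimes\zeta)=\intxY c_2\,d\fdue=\ST^{(2)}(\fdue)$. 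Crucially, neither value depends on the choice of $\gamma_{(x_2,y_1)}$, because $c_1$ and $c_2$ each only see one of the two marginals that every such coupling shares. Summing, $\T(\gamma_{(x_2,y_1)}\otimes\zeta)=\ST\ff$, and since $\ff$ is an optimal cardinal flow, Theorem \ref{th:cardflows} gives $\ST\ff=W_c(\mu,\nu)$. Therefore $\gamma_{(x_2,y_1)}\otimes\zeta\in\Gamma_o(\mu,\nu)$. The final assertion \eqref{lm:piopt} is then the special case obtained by choosing for $\gamma_{(x_2,y_1)}$ the independent coupling $\funo_{|(x_2,y_1)}\otimes\fdue_{|(x_2,y_1)}$, which plainly lies in $\Pi(\funo_{|(x_2,y_1)},\fdue_{|(x_2,y_1)})$ and depends measurably on $(x_2,y_1)$ by the Disintegration Theorem.

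I expect the only real friction to be bookkeeping rather than mathematics: keeping straight on which factor space each conditional law lives, justifying the coordinate reordering, and checking the measurability of the glued family so that $\gamma_{(x_2,y_1)}\otimes\zeta$ is a legitimate object to which \eqref{eq:disintegrationformula} applies. Once the marginals are pinned down, the cost identity follows immediately from separability, and the optimality reduces to a one-line appeal to Theorem \ref{th:cardflows}.
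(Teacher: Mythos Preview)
Your proof is correct and follows essentially the same route as the paper's: both rest on the observation that $\pi:=\gamma_{(x_2,y_1)}\otimes\zeta$ satisfies $L(\pi)=\ff$, whence $\T(\pi)=\ST(L(\pi))=\ST\ff=W_c(\mu,\nu)$ by Theorem \ref{th:cardflows}. The paper's proof is a one-liner that simply asserts $L(\pi)=\ff$ and invokes the already-established identity $\T=\ST\circ L$, whereas you unpack that assertion by hand---verifying the $X$- and $Y$-marginals and computing the two cost pieces via disintegration---which is exactly what checking $L(\pi)=\ff$ amounts to.
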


\begin{proof}
Let $\pi\in\Pimn$ be defined as \ref{eq:piopt}. Since $L(\pi)=\ff$ and $\ff$ is optimal, we conclude $\pi\in\Gamma_o\mn$.
\end{proof}

\noindent As a straightforward consequence we get the following.

\begin{theorem}
Let $\mu\in\PP(X)$, $\nu\in\PP(Y)$, and $c=c_1+c_2$ a separable cost function. Let us assume the transportation problem between $\mu$ and $\nu$ has a unique solution $\pi$. Then, if $(X_1,X_2,Y_1,Y_2)$ is the coupling inducing the law $\pi$, we have
\begin{enumerate}
    \item \label{pt_1} $X_1$ and $Y_2$ are conditionally independent given $X_2$ and $Y_1$,
    \item \label{pt_2} $X_2$ and $Y_1$ are conditionally independent given $X_1$ and $Y_2$.
\end{enumerate}
\end{theorem}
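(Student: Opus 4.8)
The plan is to read both conditional-independence statements off the factorized optimal plan produced by Lemma \ref{lm:conditional_law}, obtaining point \ref{pt_1} directly and point \ref{pt_2} by exploiting the symmetry of the separable structure under interchanging the two coordinate blocks.

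For point \ref{pt_1}: since $\pi$ is optimal, $(\funo,\fdue):=L(\pi)$, with $L$ as in \eqref{def:Lop}, is an optimal cardinal flow, and we let $\zeta=\projxy\funo$ be its pivot measure. Lemma \ref{lm:conditional_law} guarantees that the plan $\widehat\pi:=\big(\funo_{|(x_2,y_1)}\otimes\fdue_{|(x_2,y_1)}\big)\otimes\zeta$ of \eqref{lm:piopt} lies in $\Gamma_o(\mu,\nu)$; the uniqueness hypothesis $\Gamma_o(\mu,\nu)=\{\pi\}$ then forces $\pi=\widehat\pi$. It remains to translate this factorization into a probabilistic statement. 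Disintegrating $\pi$ along the projection onto $Y_1\times X_2$, whose push-forward is $\zeta$, yields the conditional law of $(X_1,Y_2)$ given $(X_2,Y_1)=(x_2,y_1)$, which by the displayed form equals $\funo_{|(x_2,y_1)}\otimes\fdue_{|(x_2,y_1)}$; here $\funo_{|(x_2,y_1)}$ is a law on $X_1$ and $\fdue_{|(x_2,y_1)}$ a law on $Y_2$. Taking the two marginals of this product recovers $\funo_{|(x_2,y_1)}$ and $\fdue_{|(x_2,y_1)}$ as the conditional laws of $X_1$ and of $Y_2$ given $(X_2,Y_1)$, so the conditional joint law of $(X_1,Y_2)$ factors as the product of its conditional marginals. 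This is exactly the assertion that $X_1$ and $Y_2$ are conditionally independent given $(X_2,Y_1)$.

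For point \ref{pt_2} I would not compute directly from the form above, which is adapted to conditioning on $(X_2,Y_1)$ rather than on $(X_1,Y_2)$, but instead re-derive the analogous factorization for the transposed problem. Introduce the coordinate swaps $S_X:(x_1,x_2)\mapsto(x_2,x_1)$ and $S_Y:(y_1,y_2)\mapsto(y_2,y_1)$, and set $\tilde\mu=(S_X)_\#\mu$, $\tilde\nu=(S_Y)_\#\nu$, and $\tilde\pi=(S_X\times S_Y)_\#\pi$. Because $c=c_1+c_2$ is symmetric under interchanging the two blocks, the cost written in the swapped coordinates is again separable (the summands $c_1$ and $c_2$ are merely reassigned to the swapped blocks), and $S_X\times S_Y$ is a cost-preserving homeomorphism; it therefore induces a bijection $\Gamma_o(\mu,\nu)\to\Gamma_o(\tilde\mu,\tilde\nu)$, so $\tilde\pi$ is the unique optimal plan of the swapped problem. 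Applying point \ref{pt_1}, now established, to this relabeled problem gives that, under $\tilde\pi$, the first block of $\tilde X$ is conditionally independent of the second block of $\tilde Y$ given the other two blocks. Unwinding $\tilde X_1=X_2$, $\tilde X_2=X_1$, $\tilde Y_1=Y_2$, $\tilde Y_2=Y_1$ turns this into the conditional independence of $X_2$ and $Y_1$ given $(X_1,Y_2)$, which is point \ref{pt_2}.

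The routine parts are the bookkeeping with disintegrations — using their essential uniqueness to identify the conditional marginals — and the verification that the coordinate swap preserves optimality and uniqueness. The one genuinely conceptual step, and the point I would be most careful to get right, is the dictionary between the \emph{product} disintegration in \eqref{lm:piopt} and conditional independence, since this is where the probabilistic content of Lemma \ref{lm:conditional_law} is extracted; once it is fixed, point \ref{pt_2} comes essentially for free, being point \ref{pt_1} read in swapped coordinates.
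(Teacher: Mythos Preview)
Your argument is correct and follows the paper's approach: point~\ref{pt_1} comes from Lemma~\ref{lm:conditional_law} together with uniqueness of the optimal plan and of the disintegration, and point~\ref{pt_2} is obtained from point~\ref{pt_1} via a symmetry. The only difference is cosmetic: the paper swaps $\mu$ and $\nu$ (source and target) rather than the two coordinate blocks, but both symmetries reduce point~\ref{pt_2} to point~\ref{pt_1} in the same way.
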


\begin{proof}
Let $\pi$ be the optimal transportation plan, $\ff=L(\pi)$, and $\zeta$ the pivot measure. Since the plan \eqref{lm:piopt} is optimal, we have
\[
\pi=\big( \funo_{|(x_2,y_1)}\otimes\fdue_{|(x_2,y_1)} \big)  \otimes \zeta,
\]
by the uniqueness of the disintegration we find $\pi_{|(x_2,y_1)}=\funo_{|(x_2,y_1)}\otimes\fdue_{|(x_2,y_1)}$, which proves \ref{pt_1}. By swapping $\mu$ and $\nu$, we conclude \ref{pt_2}.
\end{proof}

\begin{corollary}
\label{lm:pivotdelta}
Let $\mu\in\PP(X)$, $\nu\in \PP(Y)$. If there exist $\bar{x}_2\in X_2$ for which
\begin{equation}
    \label{eq:casopar1}
    \mu_2(\{\bar{x}_2\})=\mu(X_1\times \{\bar{x}_2\})=1
\end{equation}
the pivot measure is $\zeta= \nu_1\otimes\delta_{\bar{x}_2}$. Similarly, if there exists $\bar{y}_1$ such that
\begin{equation}
    \label{eq:casopar2}
    \nu_1(\{\bar{y}_1\})=\nu(\{\bar{y}_1\}\times Y_2)=1,
\end{equation}
the pivot measure is $\zeta=\delta_{\bar{y}_1}\otimes\mu_2$. Moreover, if there exists a couple $(\bar{x}_2,\bar{y}_1)$ satisfying both \eqref{eq:casopar1} and \eqref{eq:casopar2}, then $\zeta = \delta_{(\bar{y}_1,\bar{x}_2)}$ is the pivot measure and each $\pi\in\Pimn$ is optimal.
\end{corollary}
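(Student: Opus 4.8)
The plan is to reduce everything to a single observation: the degeneracy hypotheses collapse the set of intermediate measures $\IImn$ to one point, after which the result follows from the fact that every pivot measure lies in $\IImn$ (Remark \ref{rmk:pivotisinter}). Before exploiting this, I would first record that a pivot measure always exists. Since $c$ is nonnegative the set $\Gamma_o\mn$ is nonempty, so by the discussion following Theorem \ref{th:cardflows} the image $L(\Gamma_o\mn)$ is a nonempty set of optimal cardinal flows; each such flow $\ff$ satisfies $\projxy\funo=\projxy\fdue$ by \eqref{def:cond3}, hence glues on this common $Y_1\times X_2$-marginal, which is by definition a pivot measure.

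Next, under \eqref{eq:casopar1} I would show $\IImn=\{\nu_1\otimes\delta_{\bar{x}_2}\}$. The hypothesis forces $\mu_2=\delta_{\bar{x}_2}$, so any $\lambda\in\IImn$ satisfies $\projxx\lambda=\delta_{\bar{x}_2}$ and is therefore concentrated on $Y_1\times\{\bar{x}_2\}$; disintegrating with respect to the $X_2$-projection gives $\lambda=(\projyy\lambda)\otimes\delta_{\bar{x}_2}=\nu_1\otimes\delta_{\bar{x}_2}$. Combining this singleton description with the two facts from the first step, namely that pivot measures exist and lie in $\IImn$, identifies the pivot measure as $\nu_1\otimes\delta_{\bar{x}_2}$. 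The case \eqref{eq:casopar2} is completely symmetric, obtained by exchanging the roles of $\mu$ and $\nu$, and yields $\zeta=\delta_{\bar{y}_1}\otimes\mu_2$.

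For the last assertion, when both \eqref{eq:casopar1} and \eqref{eq:casopar2} hold we have $\mu_2=\delta_{\bar{x}_2}$ and $\nu_1=\delta_{\bar{y}_1}$, and the same argument forces $\IImn=\{\delta_{(\bar{y}_1,\bar{x}_2)}\}$, hence $\zeta=\delta_{(\bar{y}_1,\bar{x}_2)}$. To see that every $\pi\in\Pimn$ is optimal I would argue directly on the transportation functional rather than through cardinal flows: each such $\pi$ is concentrated on $(X_1\times\{\bar{x}_2\})\times(\{\bar{y}_1\}\times Y_2)$, so $\pi$-almost everywhere $c(\x,\y)=c_1(x_1,\bar{y}_1)+c_2(\bar{x}_2,y_2)$. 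Integrating and using $\projX\pi=\mu$ and $\projY\pi=\nu$, the functional reduces to $\T(\pi)=\int_{X_1}c_1(x_1,\bar{y}_1)\,d\mu_1+\int_{Y_2}c_2(\bar{x}_2,y_2)\,d\nu_2$, a quantity depending only on the fixed marginals $\mu_1$ and $\nu_2$ and hence independent of $\pi$. Thus every admissible plan attains the common value $W_c\mn$ and is optimal.

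The only genuinely delicate point is the existence of a pivot measure invoked in the first step; everything else is a routine manipulation of marginals and disintegrations. I expect the main obstacle to be making sure the singleton $\IImn$ is actually realized as a pivot measure, and not merely as an intermediate measure that no optimal cardinal flow happens to use. This is precisely what the nonemptiness of $L(\Gamma_o\mn)$ guarantees, so once that reduction is in place the computation is immediate.
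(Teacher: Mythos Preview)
Your argument is correct and, for the first two statements, matches the paper's proof exactly: both observe that the hypothesis forces $\IImn$ to be a singleton, and since every pivot measure lies in $\IImn$ (and at least one exists), the pivot measure is identified. Your explicit justification of existence via $L(\Gamma_o\mn)\neq\emptyset$ is a welcome addition that the paper leaves implicit.

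For the final assertion the two proofs diverge slightly. The paper stays inside the cardinal-flow framework: under both hypotheses it observes that not only $\IImn$ but also $\FFmn$ collapses to a single element $\ff=(\mu\otimes\delta_{\bar{y}_1},\,\delta_{\bar{x}_2}\otimes\nu)$, so that $L(\pi)=\ff$ for every $\pi\in\Pimn$; since $\T(\pi)=\ST(L(\pi))$, all plans share the same cost. You instead bypass the machinery and compute $\T(\pi)$ directly, using the concentration of $\pi$ on $(X_1\times\{\bar{x}_2\})\times(\{\bar{y}_1\}\times Y_2)$ to separate the cost into two marginal integrals independent of $\pi$. Your route is more self-contained and makes the optimality transparent without invoking $L$ or $\ST$; the paper's route has the virtue of exhibiting the unique cardinal flow explicitly, which it reuses later in Theorem~\ref{thm:last}. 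Both are equally short and valid.
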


\begin{proof}
The first two statements follow from the fact that $\IImn$ contains only one measure, which is $\nu_1\otimes \delta_{\bar{x}_2}$ in the first case and $\delta_{\bar{y}_1}\otimes \mu_2$ in the second one. If both \eqref{eq:casopar1} and \eqref{eq:casopar2} hold, also $\FFmn$ contains only one element, $\ff=(\mu\otimes\delta_{\bar{y}_1},\delta_{\bar{x}_2}\otimes \nu)$. In particular, $L(\pi)=\ff$ for each $\pi\in\Pimn$, therefore each $\pi \in \Pimn$ is optimal.
\end{proof}

\subsection{Two Specific Frameworks}
\label{par_fin}
To conclude, we inhabit our studies in two specific frameworks. In the first one, the cost function is a separable distance, i.e. the sum of two distances. In the second one, the measures are supported over $\erre^2$ and the cost function has the form \eqref{introd_formula1}.

\subsubsection{Separable Distances}

When $X=Y$ and we choose $c=d$ as a cost function, Theorem \ref{thm:Wpdistance} states that the Optimal Transportation problem lifts the distance structure from $X$ to $P_p(X)$. When $d$ is separable, the induced distance $W_d$ inherits a weaker version of the separability, i.e.
\[
W_d\mn=W_d(\mu,\zeta)+W_d(\zeta,\nu),
\]
for any  pivot measure $\zeta$.\\

\noindent Since $X=Y$, we need to slightly change the notations in order to avoid confusion. We denote the generic point $(\x^{(1)},\x^{(2)})\in X \times X$ with
\[
(\x^{(1)},\x^{(2)})=\big((x_1^{(1)},x_2^{(1)}),(x_1^{(2)},x_2^{(2)})\big),
\]
hence, we denote with $\x^{(i)}$ the $i-$th component in the space $X\times X$ and we denote with $x^{(i)}_j$ the $j-$th component of $\x^{(i)}$. The projections $\pr{}^{(i)}:X \times X \to X$ and $p_j^{(i)}:X\times X\to X_j$ are defined as
\begin{equation}
    \pr{}^{(i)}(\x^{(1)},\x^{(2)}):=\x^{(i)}\quad \quad \text{and}\quad \quad    p_j^{(i)}(\x^{(1)},\x^{(2)}):=x^{(i)}_j
\end{equation}
for $i=1,2$ and $j=1,2$. In particular, we have
\[
\pr{}^{(i)}=(p_1^{(i)},p_2^{(i)}).
\]

\begin{theorem}
\label{th:sepWd}
Let us take $\mu,\nu \in \probX$, where $X=X_1\times X_2$, and let $d$ be a separable distance over $X$, i.e.
\[
d(\x,\y)=d_1(x_1,y_1)+d_2(x_2,y_2), \qquad \qquad \forall \x,\y \in X,
\]
where $d_1:X_1\times Y_1\to\erre$ and $d_2:X_2\times Y_2\to\erre$ are two distances. Then, for any pivot measure $\zeta \in \IImn$, we have
\begin{equation}
    W_d(\mu,\nu)=W_d(\mu,\zeta)+W_d(\zeta,\nu).
\end{equation}
\end{theorem}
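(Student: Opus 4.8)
The plan is to sandwich $W_d(\mu,\nu)$ between $W_d(\mu,\zeta)+W_d(\zeta,\nu)$ from both sides, using the triangle inequality for one direction and an explicit one-directional coupling for the other. First I would note that, since $X=Y$ here, the space $Y_1\times X_2$ on which $\zeta$ lives coincides with $X_1\times X_2=X$, so $\zeta$ is naturally a probability measure on $X$ and the quantities $W_d(\mu,\zeta)$ and $W_d(\zeta,\nu)$ are well defined; moreover, since $\mu,\nu$ have finite first moment and the marginals of $\zeta$ are $\mu_2$ and $\nu_1$, the measure $\zeta$ also has finite first moment. Because $d$ is a genuine distance, Theorem \ref{thm:Wpdistance} (with $p=1$) tells us that $W_d$ is a metric on $P_1(X)$, so the triangle inequality immediately yields
\[
W_d(\mu,\nu)\le W_d(\mu,\zeta)+W_d(\zeta,\nu).
\]
It therefore remains to prove the reverse inequality.

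For the reverse inequality, the key point I would exploit is that $\zeta$, being an intermediate measure (Remark \ref{rmk:pivotisinter}), shares its $X_2$-marginal with $\mu$ (both equal $\mu_2$) and its $X_1$-marginal with $\nu$ (both equal $\nu_1$). This allows me to bound each Wasserstein term by a single-direction transport cost. To bound $W_d(\mu,\zeta)$, I disintegrate $\mu=\mu_{|x_2}\otimes\mu_2$ and $\zeta=\zeta_{|x_2}\otimes\mu_2$ with respect to the $X_2$ coordinate and, invoking the measurable selection Lemma \ref{lm:measselplans}, pick for $\mu_2$-a.e.\ $x_2$ an optimal $d_1$-plan $\gamma_{x_2}\in\Pi(\mu_{|x_2},\zeta_{|x_2})$. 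I then assemble the coupling on $X\times X$ that leaves the second coordinate unchanged at $x_2$ and distributes the first coordinates according to $\gamma_{x_2}$. Its marginals are exactly $\mu$ and $\zeta$ (by the disintegration formula and the shared marginal $\mu_2$), and since the coupling does not move the second coordinate the $d_2$-part of the cost vanishes, so its total $d$-cost equals $\int_{X_2}W_{d_1}(\mu_{|x_2},\zeta_{|x_2})\,d\mu_2$. Hence
\[
W_d(\mu,\zeta)\le\int_{X_2}W_{d_1}(\mu_{|x_2},\zeta_{|x_2})\,d\mu_2,
\]
and a symmetric construction, this time fixing the first coordinate and transporting in the $X_2$-direction, gives
\[
W_d(\zeta,\nu)\le\int_{Y_1}W_{d_2}(\zeta_{|y_1},\nu_{|y_1})\,d\nu_1.
\]

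Summing these two bounds and recognizing the right-hand side as the decomposition furnished by Theorem \ref{th::conditionaltransport} applied to the pivot measure $\zeta$, I obtain
\[
W_d(\mu,\zeta)+W_d(\zeta,\nu)\le\int_{X_2}W_{d_1}(\mu_{|x_2},\zeta_{|x_2})\,d\mu_2+\int_{Y_1}W_{d_2}(\zeta_{|y_1},\nu_{|y_1})\,d\nu_1=W_d(\mu,\nu),
\]
which is precisely the reverse inequality. Combined with the triangle inequality above, this proves the claimed identity.

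I expect the only delicate point to be the explicit coupling construction: one must check the measurability of the selection $x_2\mapsto\gamma_{x_2}$ so that the glued measure is a bona fide element of $\PP(X\times X)$, and verify that its two marginals are exactly $\mu$ and $\zeta$, which rests on the uniqueness of disintegrations and on $\mu$ and $\zeta$ sharing the marginal $\mu_2$. Everything else is the simple observation that the ``orthogonal'' cost component contributes nothing when the corresponding coordinate is transported to itself, so that the triangle inequality is in fact saturated by the pivot measure.
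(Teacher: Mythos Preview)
Your proof is correct and shares the paper's overall architecture: the triangle inequality for $W_d$ gives one direction, and for the reverse you build ``one-directional'' couplings between $\mu$ and $\zeta$ (fixing the $X_2$ coordinate) and between $\zeta$ and $\nu$ (fixing the $X_1$ coordinate), so that the orthogonal piece of the separable distance contributes nothing.

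The execution differs slightly. The paper takes an optimal cardinal flow $\ff$ gluing on $\zeta$ and obtains the two couplings directly as push-forwards of $\funo$ and $\fdue$ under maps of the form $(\x,(y_1,x_2))$ and $((y_1,x_2),\y)$; then, using only Theorem~\ref{th:cardflows}, it reads off
\[
W_d(\mu,\nu)=\int d_1\,d\funo+\int d_2\,d\fdue=\int d\,d\pi^{(1)}+\int d\,d\pi^{(2)}\ge W_d(\mu,\zeta)+W_d(\zeta,\nu).
\]
You instead rebuild the couplings from scratch via disintegration and Lemma~\ref{lm:measselplans}, and then close the argument by invoking Theorem~\ref{th::conditionaltransport} to identify the sum $\int W_{d_1}\,d\mu_2+\int W_{d_2}\,d\nu_1$ with $W_d(\mu,\nu)$. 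Your route is perfectly valid and arguably more transparent about \emph{why} each term is bounded, but it leans on the stronger Theorem~\ref{th::conditionaltransport} (and thereby on a second use of measurable selection), whereas the paper's push-forward construction needs only the coarser equivalence of Theorem~\ref{th:cardflows}.
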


\begin{proof}
Since $W_d$ is a distance over $\PP(X)$, by the triangular inequality, we have
\begin{equation}
\label{th:trineq}
 W_d(\mu,\nu)\leq W_d(\mu,\zeta)+W_d(\zeta,\nu),
\end{equation}
for any $\zeta\in\PP(X)$, and, in particular, for each $\zeta\in \IImn.$

\noindent To prove the other inequality, let us take $\ff\in \FFmn$ an optimal cardinal flow. We define
\[
\pi^{(1)}=\big(\pr{}^{(1)},(p^{(2)}_1,p^{(1)}_2)\big)_{\#}f^{(1)}
\]
and
\[
\pi^{(2)}=\big((p^{(2)}_1,p^{(1)}_2),\pr{}^{(2)}\big)_{\#}f^{(2)}.
\]

\noindent By definition, we have
\begin{equation}
\label{th:DSrel1}
    \pr{}^{(1)}\circ\big(\pr{}^{(1)},(p^{(2)}_1,p^{(1)}_2)\big)=(p^{(1)}_1,p^{(1)}_2)
\end{equation}
and
\begin{equation}
\label{th:DSrel2}
     \pr{}^{(2)}\circ\big(\pr{}^{(1)},(p^{(2)}_1,p^{(1)}_2)\big)=(p^{(2)}_1,p^{(1)}_2).
\end{equation}

\noindent Through the relations \eqref{th:DSrel1}, \eqref{th:DSrel2}, and the chain rule for the push-forwards (Lemma \ref{lm:chainrule}) we get
\begin{eqnarray*}
\pr{}^{(1)}_\# \pi^{(1)}&=&(p^{(1)}_1,p^{(1)}_2)_\# \Big(\big(\pr{}^{(1)},(p^{(2)}_1,p^{(1)}_2)\big)_{\#}\funo\Big)=(p^{(1)}_1,p^{(1)}_2)_\# \funo=\mu
\end{eqnarray*}
and
\[
\pr{}^{(2)}_\# \pi^{(1)}=\zeta,
\]
hence $\pi^{(1)} \in \Pi(\mu,\zeta).$ Similarly, we get $\pi^{(2)}\in \Pi(\zeta,\nu)$. Finally, by Theorem \ref{th:cardflows} we have that
\begin{eqnarray}
\nonumber W_d(\mu,\nu)&=&\int_{X\times X_1}d_1\;d\funo+\int_{X_2\times X}d_2\;d\fdue\\
\nonumber&=&\int_{X\times X_1}d\circ \big(\pr{}^{(1)},(p^{(2)}_1,p^{(1)}_2)\big) d\funo\\
\nonumber&\qquad&+\int_{X_2\times X}d\circ\big((p^{(2)}_1,p^{(1)}_2),\pr{}^{(2)}\big) d\fdue\\
\nonumber&=&\int_{X\times X}d\;d\pi^{(1)}+\int_{X\times X}d\;d\pi^{(2)}\\
\nonumber&\geq& W_d(\mu,\zeta)+W_d(\zeta,\nu).
\end{eqnarray}
The latter inequality, in conjunction with \eqref{th:trineq}, allows us to conclude
\[
W_d(\mu,\nu)=W_d(\mu,\zeta)+W_d(\zeta,\nu)
\]
for any pivot measure $\zeta$.
\end{proof}

\begin{remark}
The previous Theorem states that any pivot measure minimizes the functional
\[
\Theta:\lambda \to W_d(\mu,\lambda)+W_d(\lambda,\nu).
\]
However, the reverse implication is not true. Let us consider, for
instance, $X=\erre^2$,
\[
\mu=\dfrac{1}{2}\big[\delta_{(0,0)}+\delta_{(7,1)}\big]\quad \quad \text{ and }\quad \quad \nu=\dfrac{1}{2}\big[\delta_{(1,1)}+\delta_{(8,0)}\big].
\]
It is easy to see that the only pivot measure is $\zeta=\dfrac{1}{2}\big[\delta_{(1,0)}+\delta_{(8,1)}\big]$. Since $\mu_2=\nu_2$, we have $\nu\in\IImn$ and, therefore
\begin{equation*}
    W_d(\mu,\nu)=\inf_{\lambda\in\IImn}W_d(\mu,\lambda)+W_d(\lambda,\nu)\leq W_d(\mu,\nu)+W_d(\nu,\nu)=W_d(\mu,\nu)
\end{equation*}
hence $\nu$ minimizes $\Theta$, although it is not the pivot measure.
\end{remark}

\subsubsection{Cardinal Flow in the Euclidean Space}
\noindent Let us now consider $X=Y=\erre^2$ and a separable cost function $c=c_1+c_2$, with
\begin{equation}
\label{formula:powerlikecosts}
c_i(x,y)=h(|x_i-y_i|),
\end{equation}
where $h:\erre \to \rpos$ is a convex function such that $h(0)=0$. Due to the structure of $c_i$,  Corollary \ref{cor:structureW_c} allows us to rewrite \eqref{frm::Wcdecomposition} in terms of pseudo-inverse functions.

\begin{theorem}
\label{thm:euclideanopt}
Let us take $\mu,\nu\in\PP(\erre^2)$, and $c=c_1+c_2$ a separable cost function, where the functions $c_i$ are as in \eqref{formula:powerlikecosts}. Assume that $\zeta \in \IImn$ is a pivot measure between $\mu$ and $\nu$, then it holds true
\begin{eqnarray*}
W_c(\mu,\nu)&=&\int_{\erre}\int_{[0,1]} h(|F^{[-1]}_{\mu_{|x_2}}(s)-F^{[-1]}_{\zeta_{|x_2}}(s)|)dsd\mu_2\\
&\quad&+\int_{\erre}\int_{[0,1]} h(|F^{[-1]}_{\zeta_{|y_1}}(t)-F^{[-1]}_{\nu_{|y_1}}(t)|)dtd\nu_1,
\end{eqnarray*}
where $F^{[-1]}_{\mu_{|x_2}},F^{[-1]}_{\zeta_{|x_2}},F^{[-1]}_{\zeta_{|y_1}}$, and $F^{[-1]}_{\nu_{|y_1}}$ are the pseudo-inverse functions of the cumulative function of $\mu_{|x_2},\zeta_{|x_2},\zeta_{|y_1}$, and $\nu_{|y_1}$, respectively. In particular, if $h=|\circ|$, we have
\[
W_1(\mu,\nu)=\int_\erre \int_\erre |F_{\mu_{|x_2}}(s)-F_{\zeta_{|x_2}}(s)|dsd\mu_2+\int_{\erre}\int_\erre |F_{\zeta_{|y_1}}(t)-F_{\nu_{|y_1}}(t)|dtd\nu_1.
\]
\end{theorem}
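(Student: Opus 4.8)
The plan is to obtain the statement as an immediate specialization of Theorem \ref{th::conditionaltransport}: I would take the decomposition \eqref{frm::Wcdecomposition} and replace each of the two one-dimensional Wasserstein costs appearing inside the integrals by its closed form in terms of pseudo-inverse cumulative functions.

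First I would apply Theorem \ref{th::conditionaltransport} to the pivot measure $\zeta$. Since $X=Y=\erre^2$, we have $X_2=Y_1=\erre$, so \eqref{frm::Wcdecomposition} becomes the exact identity
\[
W_c(\mu,\nu)=\int_\erre W_{c_1}(\mu_{|x_2},\zeta_{|x_2})\,d\mu_2+\int_\erre W_{c_2}(\zeta_{|y_1},\nu_{|y_1})\,d\nu_1.
\]
The key observation is that every integrand is now a genuinely one-dimensional transport cost: for $\mu_2$-almost every $x_2$, the conditional laws $\mu_{|x_2}$ and $\zeta_{|x_2}$ are probability measures on a copy of $\erre$, and by \eqref{formula:powerlikecosts} the relevant cost is $c_1(x_1,y_1)=h(|x_1-y_1|)$ with $h$ convex and $h(0)=0$; the same holds for the second family with cost $c_2$.

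Next, for fixed $x_2$ I would bring in the one-dimensional theory. By Theorem \ref{th:optmonotoneplan} the co-monotone plan $\gamma_{mon}=(F^{[-1]}_{\mu_{|x_2}},F^{[-1]}_{\zeta_{|x_2}})_\#\Leb_{|[0,1]}$ is optimal for the cost $h(|x_1-y_1|)$, and computing its cost through the push-forward characterization \eqref{eq:pushforwardchar} applied to the test function $\phi(x_1,y_1)=h(|x_1-y_1|)$ gives
\[
W_{c_1}(\mu_{|x_2},\zeta_{|x_2})=\int_{[0,1]}h(|F^{[-1]}_{\mu_{|x_2}}(s)-F^{[-1]}_{\zeta_{|x_2}}(s)|)\,ds.
\]
An identical argument applies to $W_{c_2}(\zeta_{|y_1},\nu_{|y_1})$. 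Substituting both expressions back into the displayed decomposition produces the first formula. For the special case $h=|\circ|$ I would instead use the cumulative-function form in Corollary \ref{cor:structureW_c}, which replaces each inner cost by $\int_\erre|F_{\mu_{|x_2}}(s)-F_{\zeta_{|x_2}}(s)|\,ds$ (and its analogue for the second term), yielding the $W_1$ identity.

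Two points require care, though neither is a deep obstacle. The $\mu_2$-measurability of $x_2\mapsto\int_{[0,1]}h(|F^{[-1]}_{\mu_{|x_2}}-F^{[-1]}_{\zeta_{|x_2}}|)\,ds$ (needed for the outer integral to be well defined) comes for free, since this map coincides pointwise with $x_2\mapsto W_{c_1}(\mu_{|x_2},\zeta_{|x_2})$, whose measurability is exactly what the proof of Theorem \ref{th::conditionaltransport} secures via Lemma \ref{lm:measselplans}. The more delicate point is that Theorem \ref{th:optmonotoneplan} is stated for strictly convex $h$, whereas here $h$ is only convex; I would close this gap by approximating with the strictly convex costs $h_\varepsilon(t)=h(t)+\varepsilon t^2$, for each of which $\gamma_{mon}$ is the unique optimizer, and then letting $\varepsilon\to0$, using that the co-monotone plan is simultaneously optimal for all convex costs.
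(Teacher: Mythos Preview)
Your proposal is correct and matches the paper's approach exactly: the paper does not even write out a proof for this theorem, merely remarking that ``Corollary \ref{cor:structureW_c} allows us to rewrite \eqref{frm::Wcdecomposition} in terms of pseudo-inverse functions,'' which is precisely the substitution you describe. Your added remarks on measurability and on handling merely convex $h$ via approximation go beyond what the paper spells out, but do not alter the argument.
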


\noindent In this framework, we are able to express the optimal transportation plan as long as we detect the pivot measure $\zeta$. In particular, if the hypothesis of Corollary \ref{lm:pivotdelta} are satisfied, we retrieve an explicit formula for the optimal transportation plan between measures in $\erre^2$.

\begin{theorem}
\label{thm:last}
Let $\mu,\nu\in\PP(\erre^2)$, and let $c=c_1+c_2$ be a separable cost function where the functions $c_i$ are as in \eqref{formula:powerlikecosts}. If $\mu$ is such that
\[
\mu(\{x_2=0\})=1,
\]
then the optimal cardinal flow $\ff$ between $\mu$ and $\nu$ is defined as
\begin{equation}
    \label{eq:KR}
    \funo:=(F^{[-1]}_{\mu_1},F^{[-1]}_{\nu_1})_\#\Leb_{|[0,1]}\otimes \delta_0\quad\text{and}\quad\fdue:=(\delta_{0}\otimes (F^{[-1]}_{\nu_{|y_1}})_\#\Leb_{|[0,1]})\otimes \nu_1.
\end{equation}
In particular, we have
\begin{equation}
   \label{eq:evalutation}
W_c\mn=W_{c_1}(\mu_1,\nu_1)+\int_{\erre}W_{c_2}(\delta_{0},\nu_{|y_1})d\nu_1. 
\end{equation}
\end{theorem}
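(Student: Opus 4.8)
The plan is to recognize the hypothesis $\mu(\{x_2=0\})=1$ as an instance of the degenerate case already isolated in Corollary \ref{lm:pivotdelta}, pin down the unique pivot measure, and then read off both the cost formula \eqref{eq:evalutation} and the explicit flow \eqref{eq:KR} by specializing the construction in the proof of Theorem \ref{th::conditionaltransport} and invoking the one-dimensional theory.

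First I would note that $\mu(\{x_2=0\})=1$ is exactly condition \eqref{eq:casopar1} with $\bar{x}_2=0$; equivalently, $\mu_2=\delta_0$. Corollary \ref{lm:pivotdelta} then gives that $\IImn$ is a singleton and that the unique pivot measure is $\zeta=\nu_1\otimes\delta_0$. This is the step carrying the essential content, since once $\zeta$ is fixed the abstract machinery becomes fully explicit. Next I would compute the conditional laws entering \eqref{frm::Wcdecomposition}. Because $\mu_2=\delta_0$, disintegrating $\mu$ in the second coordinate leaves only the fiber $x_2=0$, with $\mu_{|0}=\mu_1$; disintegrating $\zeta=\nu_1\otimes\delta_0$ yields $\zeta_{|0}=\nu_1$ when conditioning on $x_2$ and $\zeta_{|y_1}=\delta_0$ for every $y_1$ when conditioning on $y_1$. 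Substituting into Theorem \ref{th::conditionaltransport}, the first integral collapses to the single term $W_{c_1}(\mu_1,\nu_1)$ (as $\mu_2=\delta_0$) while the second becomes $\int_{\erre}W_{c_2}(\delta_0,\nu_{|y_1})d\nu_1$, which is precisely \eqref{eq:evalutation}.

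For the flow \eqref{eq:KR} I would follow the explicit construction $\funo=\pi_{|x_2}\otimes\mu_2$ and $\fdue=\pi_{|y_1}\otimes\nu_1$ used in the proof of Theorem \ref{th::conditionaltransport}, with $\pi_{|x_2}$ and $\pi_{|y_1}$ optimal between the corresponding conditional laws. The first fiber problem is one-dimensional optimal transport between $\mu_1$ and $\nu_1$ for the convex cost $h(|\cdot|)$, so Theorem \ref{th:optmonotoneplan} identifies its optimizer as the co-monotone plan $\gamma_{mon}=(F^{[-1]}_{\mu_1},F^{[-1]}_{\nu_1})_\#\Leb_{|[0,1]}$; tensoring with $\delta_0$ (the only fiber) recovers $\funo$. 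The second fiber problem has source $\zeta_{|y_1}=\delta_0$, and any plan out of a Dirac is forced to be the product $\delta_0\otimes\nu_{|y_1}$; writing $\nu_{|y_1}=(F^{[-1]}_{\nu_{|y_1}})_\#\Leb_{|[0,1]}$ and tensoring with $\nu_1$ recovers $\fdue$. Since $\ff$ glues on the unique pivot $\zeta$ and each fiber plan is optimal, the argument of Theorem \ref{th::conditionaltransport} gives $\ST\ff=W_c(\mu,\nu)$, so the flow is optimal.

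The only genuine obstacle is bookkeeping rather than depth: one must verify that a transport plan whose source marginal is a Dirac is necessarily the product measure (this is what makes $\fdue$ canonical), and keep the coordinate orderings straight so that $\funo$ and $\fdue$ live on $\Xfu$ and $\Xfd$ respectively after the tensor products. A secondary point worth flagging is that, when $h$ is merely convex rather than strictly convex, the first fiber optimizer need not be unique, so the word ``the'' in the statement should be read as selecting the canonical co-monotone representative $\gamma_{mon}$, which is always optimal.
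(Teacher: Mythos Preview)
Your proposal is correct and follows essentially the same route as the paper: invoke Corollary~\ref{lm:pivotdelta} to pin down $\zeta=\nu_1\otimes\delta_0$, compute the conditional laws, apply the decomposition formula to obtain \eqref{eq:evalutation}, and then use the one-dimensional theory (Theorem~\ref{th:optmonotoneplan}) on each fiber to identify the explicit flow \eqref{eq:KR}. Your version is in fact slightly more explicit than the paper's---you spell out why the second fiber plan is forced (source marginal a Dirac) and flag the non-uniqueness caveat when $h$ is merely convex---but the underlying argument is the same.
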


\begin{proof}
From Corollary \ref{lm:pivotdelta}, we have that the pivot measure between $\mu$ and $\nu$ is
\[
\zeta=\nu_1\otimes \delta_0.
\]
From Theorem \ref{thm:euclideanopt}, we get
\[
\funo=\gamma^{(1)}_{|x_2}\otimes \delta_{0}\quad \quad \text{and} \quad \quad \fdue=\gamma^{(2)}_{|y_1}\otimes \nu_1,
\]
where $\gamma^{(1)}_{|x_2}$ is the measurable family of optimal transportation plans between $\mu_{|x_2}=\mu_1$ and $\zeta_{|x_2}=\nu_1$. Similarly, $\gamma^{(2)}_{|y_1}$ is the measurable family of optimal transportation plans between $\zeta_{|y_1}=\delta_{0}$ and $\nu_{|y_1}$. Theorem \ref{th:optmonotoneplan} allows us to characterize the optimal cardinal flow as in \eqref{eq:KR}. 

\noindent Finally, by evaluating the functional $\ST$ in $\ff$, we find \eqref{eq:evalutation} and conclude the proof.
\end{proof}

\begin{remark}
One of the optimal transportation plans inducing the cardinal flow in \eqref{eq:KR} is the Knothe-Rosenblatt rearrangement, \cite{knothe1957,rosenblatt1952}.
\end{remark}

\begin{figure}[!t]
    \centering
    \includegraphics[width=\linewidth]{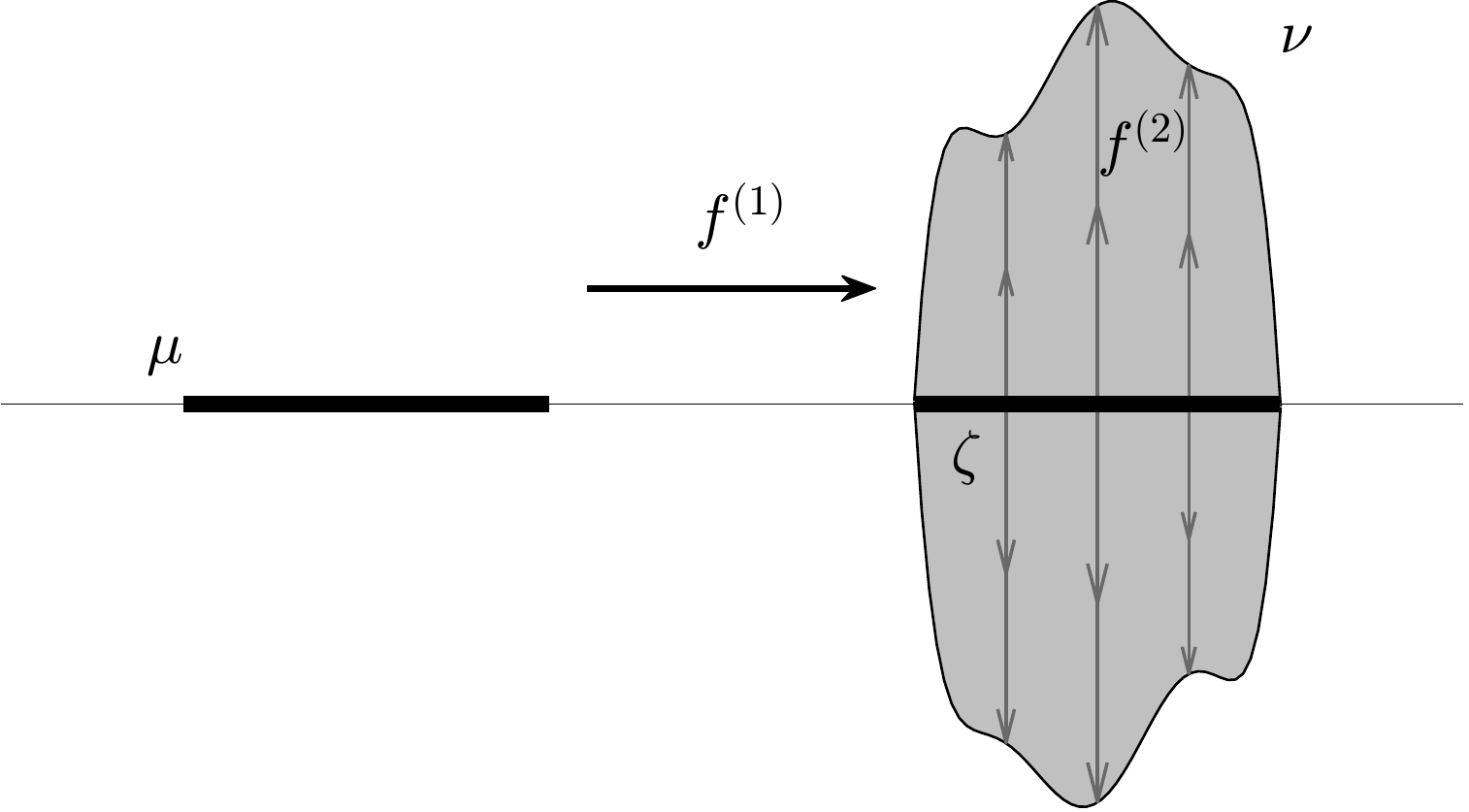}
    \caption{The cardinal flow found in Theorem \ref{thm:last}.}
    \label{fig:my_label}
\end{figure}

\noindent Theorem \ref{thm:last} becomes particularly useful when we take the squared Euclidean distance as cost function, i.e.
\[
c(\x,\y):=(|x_1-y_1|)^2+(|x_2-y_2|)^2.
\]
Indeed, since $c$ is invariant under isometries, Theorem \ref{thm:last} generalizes to any $\mu$ supported on a straight line.

\begin{corollary}
Let $\mu,\nu\in\PP(\erre^2)$, and $c(\x,\y):=(|x_1-y_1|)^2+(|x_2-y_2|)^2$. If there exist a triple $a,b,q\in\erre$ such that 
\[
\mu(\{ax_1+bx_2=q\})=1,
\]
then the optimal transportation plan between $\mu$ and $\nu$ is $\pi_O:=(O,O)_\#\pi$, where $O:\erre^2\to\erre^2$ is a rotation that sends the set $\{x_2=0\}$ in $\{ax_1+bx_2=q\}$ and $\pi$ is the optimal transportation plan between $(O^{(-1)})_\#\mu$ and $(O^{(-1)})_\#\nu$.
\end{corollary}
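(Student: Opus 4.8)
The plan is to reduce the statement to Theorem \ref{thm:last} by exploiting that, for the squared Euclidean distance, the separable cost coincides with the genuine squared distance $c(\x,\y)=(|x_1-y_1|)^2+(|x_2-y_2|)^2=\|\x-\y\|^2$, which is invariant under Euclidean isometries. First I would fix $O$ to be the isometry of $\erre^2$ carrying the line $\{x_2=0\}$ onto $\{ax_1+bx_2=q\}$, and set $\tilde\mu:=(O^{-1})_\#\mu$ and $\tilde\nu:=(O^{-1})_\#\nu$. Since $O^{-1}$ maps $\{ax_1+bx_2=q\}$ back onto $\{x_2=0\}$, the hypothesis $\mu(\{ax_1+bx_2=q\})=1$ yields $\tilde\mu(\{x_2=0\})=1$. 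Thus $\tilde\mu$ satisfies the hypotheses of Theorem \ref{thm:last}, which supplies an explicit optimal plan $\pi\in\Gamma_o(\tilde\mu,\tilde\nu)$ (obtained from the optimal cardinal flow there).

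The core of the argument is to show that $\sigma\mapsto (O,O)_\#\sigma$ is a cost-preserving bijection from $\Pi(\tilde\mu,\tilde\nu)$ onto $\Pimn$. For the marginals I would use the chain rule (Lemma \ref{lm:chainrule}) together with the identity $\mathfrak{p}_X\circ(O,O)=O\circ\mathfrak{p}_X$: this gives $\projX (O,O)_\#\sigma=O_\#(\mathfrak{p}_X)_\#\sigma=O_\#(O^{-1})_\#\mu=\mu$, and analogously $\projY (O,O)_\#\sigma=\nu$, so $(O,O)_\#\sigma\in\Pimn$; the inverse map $(O^{-1},O^{-1})_\#$ makes this a bijection. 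For the cost I would write $O$ as $O\z=R\z+t$ with $R$ the rotational part, so that $O\x-O\y=R(\x-\y)$ and hence $c(O\x,O\y)=\|R(\x-\y)\|^2=\|\x-\y\|^2=c(\x,\y)$; the change-of-variables identity \eqref{eq:pushforwardchar} then gives $\T((O,O)_\#\sigma)=\int c\circ(O,O)\,d\sigma=\int c\,d\sigma=\T(\sigma)$.

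Combining the two facts, the map $\sigma\mapsto(O,O)_\#\sigma$ carries minimizers of $\T$ on $\Pi(\tilde\mu,\tilde\nu)$ to minimizers on $\Pimn$, so $\pi_O=(O,O)_\#\pi$ lies in $\Gamma_o\mn$, which is exactly the claim. The remaining verification—that Theorem \ref{thm:last} indeed applies to $(\tilde\mu,\tilde\nu)$—is immediate once $\tilde\mu(\{x_2=0\})=1$ is established.

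The only genuine subtlety, which I would flag explicitly, is that a \emph{pure} rotation fixes the origin and therefore can only map $\{x_2=0\}$ to lines through the origin; for $q\neq0$ one must compose with a translation. Since the whole reduction uses nothing about $O$ beyond its being an isometry (it is the rotational part $R$, after differencing, that preserves $\|\cdot\|$), replacing ``rotation'' by ``rigid motion'' costs nothing and repairs the statement. This is the one point worth stating carefully; the rest is a routine change of variables.
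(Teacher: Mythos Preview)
Your argument is correct and follows the same route as the paper: use isometry-invariance of the squared Euclidean distance to show $\T$ is preserved under $(O,O)_\#$, deduce $W_2^2((O^{-1})_\#\mu,(O^{-1})_\#\nu)=W_2^2(\mu,\nu)$, and then apply Theorem~\ref{thm:last} to the rotated pair; you simply spell out the bijection between $\Pi(\tilde\mu,\tilde\nu)$ and $\Pimn$ more carefully than the paper does. Your remark that ``rotation'' should really be ``rigid motion'' when $q\neq 0$ is a valid observation about the statement itself---the paper's formulation has the same issue and tacitly relies on the same fix.
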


\begin{proof}
Since the Euclidean distance is invariant under isometries, we have
\[
\T(\pi)=\T((O^{(-1)},O^{(-1)})_\#\pi)
\]
for any rotation $O$ and, therefore,
\[
W_2^2((O^{(-1)})_\#\mu,(O^{(-1)})_\#\nu)=W_2^2\mn.
\]
Since $(O^{(-1)})_\#\mu$ satisfies the hypothesis of Theorem \ref{thm:last}, we conclude the thesis.
\end{proof}

\section*{Acknowledgments}
 We thank Stefano Gualandi and Marco Veneroni for their feedbacks. We also thank Gabriele Loli for enhancing the images of this paper.

\bibliographystyle{plain}
\bibliography{ref}
\end{document}